
\documentclass[11pt, draft]{amsart}
\usepackage{amssymb, amstext, amscd, amsmath, amssymb}
\usepackage{mathtools, color, paralist, dsfont, rotating}
\usepackage{verbatim}
\usepackage{enumerate}
\usepackage{tikz-cd}
\usepackage{tikz}
\usepackage[all]{xy}
\numberwithin{equation}{section}


%
\makeatletter
\def\@cite#1#2{{\m@th\upshape\bfseries%
[{#1\if@tempswa{\m@th\upshape\mdseries, #2}\fi}]}}
\makeatother
%
\theoremstyle{plain}
\newtheorem{theorem}{Theorem}[section]
\newtheorem{corollary}[theorem]{Corollary}
\newtheorem{proposition}[theorem]{Proposition}
\newtheorem{lemma}[theorem]{Lemma}
\theoremstyle{definition}
\newtheorem{definition}[theorem]{Definition}
\newtheorem{example}[theorem]{Example}

\newtheorem{remark}[theorem]{Remark}

\theoremstyle{remark}


%

\mathtoolsset{centercolon}
%
  \newcommand{\cA}{{\mathcal{A}}}
  \newcommand{\cB}{{\mathcal{B}}}
  \newcommand{\cC}{{\mathcal{C}}}

  \newcommand{\cF}{{\mathcal{F}}}
  
	\newcommand{\cH}{{\mathcal{H}}}
  
  \newcommand{\cJ}{{\mathcal{J}}}
  \newcommand{\cK}{{\mathcal{K}}}
	\newcommand{\cL}{{\mathcal{L}}}
  \newcommand{\cM}{{\mathcal{M}}}
  \newcommand{\cN}{{\mathcal{N}}}
	\newcommand{\cO}{{\mathcal{O}}}

  \newcommand{\cT}{{\mathcal{T}}}

\newcommand{\A}{{\mathcal{A}}}
\newcommand{\B}{{\mathcal{B}}}
\newcommand{\C}{{\mathcal{C}}}

\newcommand{\F}{{\mathcal{F}}}
\newcommand{\G}{{\mathcal{G}}}
\renewcommand{\H}{{\mathcal{H}}}

\newcommand{\J}{{\mathcal{J}}}
\newcommand{\K}{{\mathcal{K}}}
\renewcommand{\L}{{\mathcal{L}}}

\newcommand{\N}{{\mathcal{N}}}
\renewcommand{\O}{{\mathcal{O}}}

\newcommand{\T}{{\mathcal{T}}}



\newcommand{\bC}{\mathbb{C}}

\newcommand{\bN}{\mathbb{N}}

\newcommand{\bZ}{\mathbb{Z}}


\newcommand{\bbC}{{\mathbb{C}}}

\newcommand{\bbF}{{\mathbb{F}}}

\newcommand{\bbN}{{\mathbb{N}}}
\newcommand{\bbQ}{{\mathbb{Q}}}

\newcommand{\bbZ}{{\mathbb{Z}}}




\newcommand{\lip}{\langle}
\newcommand{\rip}{\rangle}

\newcommand{\sot}{\textsc{sot}}

\newcommand{\cenv}{\mathrm{C}_e^*}


\newcommand{\Aut}{\operatorname{Aut}}
\newcommand{\alg}{\operatorname{alg}}

\newcommand{\End}{\operatorname{End}}

\newcommand{\id}{id}

\newcommand{\Ind}{{\operatorname{Ind}}}

\newcommand{\Span}{\operatorname{span}}
\newcommand{\Ker}{\operatorname{Ker}}
\newcommand\cpr{\rtimes_{\alpha}^{r}\,{\mathcal{G}}}

\newcommand{\ca}{\mathrm{C}^*}
\begin{document}

\title{Tensor algebras of product systems and their $\ca$-envelopes}

\author[A. Dor-On]{Adam Dor-On}
\address{Department of Mathematics\\ University of Illinois at Urbana-Champaign\\ Urbana\\ IL 61801\\ USA}
\email{adoron@illinois.edu}

\author[E. Katsoulis]{Elias Katsoulis}
\address{Department of Mathematics \\ East Carolina University \\ Greenville \\ NC 27858-4353 \\ USA}
\email{katsoulise@ecu.edu}

\subjclass[2010]{Primary: 46L08, 46L55, 47B49, 47L40, 47L65, 46L05}
\keywords{Nica Toeplitz, Cuntz Nica Pimsner, C*-envelope, product systems, dilation, Hao-Ng isomorphism, quasi-lattice ordered semigroups, higher rank graphs}

\thanks{The first author was partially supported by an Ontario Trillium Scholarship, and an Azrieli international fellowship}

\maketitle

\begin{abstract}
Let $(G, P)$ be an abelian, lattice ordered group and let $X$ be a compactly aligned product system over $P$ with coefficients in $\A$. We show that the C*-envelope of the Nica tensor algebra $\N \T^+_X$ coincides with both Sehnem's covariance algebra $\A \times_X P$ and the co-universal $\ca$-algebra $\N\O^r_X$ for injective, gauge-compatible, Nica-covariant representations of Carlsen, Larsen, Sims and Vittadello. We give several applications of this result on both the selfadjoint and non-selfadjoint operator algebra theory. First we guarantee the existence of $\N\O^r_X$, thus settling a problem of Carlsen, Larsen, Sims and Vittadello which was open even for abelian, lattice ordered groups. As a second application, we resolve a problem posed by Skalski and Zacharias on dilating isometric representations of product systems to unitary representations. As a third application we characterize the $\ca$-envelope of the tensor algebra of a finitely aligned higher-rank graph which also holds for topological higher-rank graphs. As a final application we prove reduced Hao-Ng isomorphisms for generalized gauge actions of discrete groups on $\ca$-algebras of product systems. This generalizes recent results that were obtained by various authors in the case where $(G, P) =(\bbZ,\bN)$.
\end{abstract}

\section{Introduction}

Since its inception by Arveson \cite{Arv69} in the late 60's, the concept of the C*-envelope, or non-commutative Shilov boundary, has played an important role in operator algebra theory. It is through the ideas of non-commutative boundary theory that the work of Kennedy and Kalantar on C*-simplicity took flight \cite{KK17}, and in \cite{KS15} it is shown that the Arveson-Douglas conjecture can be phrased in terms of the C*-envelope of non-selfadjoint operator algebras arising from homogeneous polynomial relations.

The central result of this paper, Theorem \ref{T:CNP-envelope}, characterizes the $\ca$-envel\-ope of the Nica tensor algebra of a product system $X$ over $P$, where $(G,P)$ is an abelian, lattice ordered group. This is a new development even in the case of the abelian ordered groups $(\bbZ^d, \bbN^d)$ for $d \in \bbN$. A result in the spirit of Theorem \ref{T:CNP-envelope} has been coveted since the special case of the ordered group $(\bZ,\bN)$ was established in \cite{KK06b}, over a decade ago. Concrete cases have already appeared in the literature with the proofs sometimes requiring considerable effort (see for instance \cite[Subsection 4.3]{DFK14}). Our approach unifies these special cases and provides many new cases which were previously inaccessible by any other means. Even though Theorem \ref{T:CNP-envelope} is a non-selfadjoint algebra result, it has significant applications on $\ca$-algebra theory and a good part of the paper is devoted to them. Because of this, we view Theorem \ref{T:CNP-envelope} and the present work overall, as being right at the crossroads of the selfadjoint and non-selfadjoint operator algebra theory that should appeal to a wide audience. A more detailed description now follows.

In \cite{Pim97}, Pimsner generalized many constructions of operator algebras by associating them to C*-correspondences. Pimsner associates two C*-algebras $\cT_X$ and $\cO_X$ to a C*-correspondence $X$, where the algebra $\cO_X$ generalizes Cuntz-Krieger algebras \cite{CK80} arising from directed graphs and crossed products by $\mathbb{Z}$, while $\cT_X$ generalizes their Toeplitz extensions. In a sequence of papers \cite{Kat04a, Kat04b,Kat07}, Katsura drew insight from these specific examples and expanded Pimsner's construction. Katsura removed any assumptions on the C*-correspondence while simplifying and expanding many of Pimsner's results.

Many examples of non-selfadjoint operator algebras arise as tensor algebras of C*-correspondences, i.e., subalgebras of $\T_X$ generated by the copies of the $\ca$-correspondence and the coefficient algebra. Following Arveson's programme on the $\ca$-envelope, it makes sense to ask for a characterization of the $\ca$-envelope of a tensor algebra $\T^+_X$. Muhly and Solel \cite{MS98b} and Fowler, Muhly and Raeburn~\cite{FMR} established that the C*-envelope of $\cT^+_X$ is Pimsner's Cuntz-Pimsner algebra $\cO_X$, under various restrictions on the $\ca$-correspondence $X$. The problem for a general C*-correspondence was finally settled by Katsoulis and Kribs \cite{KK06b} who removed all restrictions and showed that the C*-envelope of any tensor algebra $\cT^+_X$ is Katsura's Cuntz-Pimsner algebra $\cO_X$. The proof required an intricate tail adding technique, as developed by Muhly and Tomforde in~\cite{MT04}. Although successful attempts have been made for C*-dynamical systems over $\bbN^d$ \cite{DFK14,Kak15}, this tail-adding technique is difficult to generalize beyond single $\ca$-correspondences. This has been an impediment for the development of non-selfadjoint algebra and C*-envelope theories. 

A product system of C*-correspondences over a partially ordered discrete group is a context which can be used to generalize the above constructs associated to a single $\ca$-correspondence. The development of this theory owes to the work of many hands \cite {Din91, LRaeb, Nic92, FR98, Fow99}. In order to obtain a satisfactory theory one needs to add the extra requirement that the product system is compactly aligned over a quasi-lattice ordered semigroup. Given a compactly aligned product system $X$ over a quasi-lattice ordered group with coefficients algebra $\A$, one builds a Nica-Toeplitz algebra $\N \T_X$ as a universal object for a suitable class of representations called \emph{Nica-covariant} representations. The theory of Nica-Toeplitz-Pimsner algebras $\N \T_X$ associated with product system $X$ reached its current state with the work of Fowler \cite{Fow02}, who successfully refined all previous ideas and provided a uniqueness theorem for it (see Theorem \ref{T:toep-uniq}). However, it was unclear what the right analogue of the Nica-type Cuntz-Pimsner algebra of a non-injective product system $X$ should be and this was left open for quite some time. Nevertheless this has now been completely settled by Sehnem's covariance algebra $\A\times_X P$ \cite{Seh+}. For a compactly aligned product system $X$ with coefficient algebra $\A$, Sehnem's covariance algebra $\A\times_X P$ plays the role of a Cuntz-Nica-Pimsner algebra, as it is a quotient of $\N\T_X$ containing a faithful copy of $X$ and satisfying a gauge invariance uniqueness theorem. Sehnem's algebra also incorporates earlier successful candidates for a Cuntz-Nica-Pimsner algebra, including the Sims-Yeend algebra $\N\O_X$ \cite{SY11} that appeared quite extensively in earlier versions of this paper.

Tensor algebras of compactly aligned product systems over abelian, lattice ordered groups provide a vast class of examples of non-selfadjoint operator algebras, and many natural operator algebras in the literature can be described as such. The list includes examples that do not materialize as tensor algebras of a single $\ca$-correspondence. These examples include the tensor algebras of $k$-graphs, studied by Davidson, Power and Yang \cite{DPY10} and the recent Nica semicrossed products of C*-dynamical systems over $\bN^d$, studied by Davidson, Fuller and Kakariadis \cite{DFK14} (see \cite{DFK14-2} for a survey). In these cases, the $\ca$-envelope has been calculated successfully, sometimes with considerable effort. Motivated by the special case of the tensor algebra of a $\ca$-correspondence, it is tempting to ask whether the aforementioned result of Katsoulis and Kribs~\cite{KK06b} holds in the greater generality of product systems, i.e., whether the $\ca$-envelope of the Nica tensor algebra of a product system is isomorphic to some Cuntz-Nica-Pimsner algebra of the system. 

Our Theorem \ref{T:CNP-envelope} answers this question by showing that if $X$ is a compactly aligned product system over an abelian, lattice ordered semigroup pair $(G,P)$, then indeed the $\ca$-envelope of the Nica tensor algebra $\cN\cT^+_X$ coincides with Sehnem's covariance algebra $\A \times_X P$. This result generalizes the earlier result of Katsoulis and Kribs~\cite{KK06b}, without the use of any tail adding technique in its proof (such a technique is currently unavailable for product systems over abelian, lattice-ordered group). 

One of the key ingredients in the proof of Theorem \ref{T:CNP-envelope} is the existence of a canonical isomorphism between the C*-envelope $\cenv(\N\T^+_X)$ and the co-universal $\ca$-algebra $\N\O^r_X$ for injective, gauge-compatible, Nica-covariant representations of Carlsen, Larsen, Sims and Vittadello \cite{CLSV11}. Not only do we list this fact in the statement of Theorem \ref{T:CNP-envelope} but we also consider it as its first application since it resolves an open problem in $\ca$-algebra theory. Indeed, the quest for co-universal algebras of this type originates with the work of Katsura in \cite[Proposition 7.14]{Kat07} for a single C*-correspondence. In \cite{CLSV11} the existence of such co-universal algebras is obtained for more general product systems under the additional assumption of $\widetilde{\phi}$-injectivity. The work in \cite{CLSV11} leaves open the case for various product systems, including product systems over $(\bbQ^d, \bbQ_+^d)$ and those of \cite[Example 3.16]{SY11} which are not $\tilde{\phi}$-injective. The C*-envelope approach shows that $\N\O_X^r$ always exists for product systems over abelian, lattice ordered groups. Furthermore, $\N\O_X^r$ can now be fully materialized within Arveson's programme and without any reference to gauge actions. This was desired (but not obtained) in \cite{CLSV11} and conjectured indirectly in \cite{DFK14-2}, as it was obtained there for a special (but nevertheless important) class of product systems over $(\bbZ^d, \bbN^d)$, $d \in \bbN$. (See \cite[Corollary  4.3.18]{DFK14-2}.)

We continue with more applications of our main result. In the context of a sourceless row-finite higher rank graph $\Lambda$, it was shown in \cite[Theorem 3.6]{KK06a} (see also \cite[Theorem 3.5]{DPY10}) that the C*-envelope of the tensor algebra $\cT_+(\Lambda)$ is the higher rank graph C*-algebra $\ca(\Lambda)$ of Kumjian and Pask \cite{KP00}. In \cite{RSY04} a more general C*-algebra was associated to a finitely aligned higher rank graph $\Lambda$, and a gauge invariant uniqueness theorem was proven for it. A tensor algebra $\cT_+(\Lambda)$ can still be defined, and our main theorem (Theorem \ref{T:CNP-envelope}) is used to easily extend \cite[Theorem 3.6]{KK06a} to finitely aligned higher rank graphs (see Theorem \ref{C:fin-aligned}).

In Section \ref{S:SZ} we provide another application of our results by resolving a question of Skalski and Zacharias from the end of \cite{SZ08}. These authors ask the following question: given a product system $X$ over $(\bZ^d,\bN^d)$, and a representation of $X$ in the sense that $(\sigma,T_1,...,T_d)$ is a $(d+1)$-tuple where each $(\sigma,T_i)$ is an isometric representation of $X_i$ with some commutation condition, is there a common dilation $(\pi,U_1,...,U_d)$ to a representation of $X$ where each $(\pi,U_i)$ is both isometric \emph{and} fully-coisometric. In Corollary \ref{C:unit-dil-SZ} we show that there is a positive answer to their question when $X$ is regular, and the representation $(\sigma,T_1,...,T_d)$ is \emph{doubly-commuting} (which is equivalent to Nica covariance in this case). We also construct in Example \ref{E:Nica-needed} a higher rank graph and an isometric representation for its associated higher-rank graph product system that has no isometric and fully coisometric dilation. This exhibits that Nica-covariance is necessary, and together with earlier observations in \cite{SZ08}, this shows that our result is optimal.

In Section \ref{S:HNI} we give our final application, this time to $\ca$-algebra theory. We use our main result to obtain analogues of the Hao-Ng isomorphism theorem in the context of product systems over more general semigroups. Recall that the Hao-Ng isomorphism problem, as popularized in \cite{BKQR}, asks for the validity of the isomorphism 
\begin{equation} \label{eq:HN}
\cO_X \cpr \cong \cO_{X \cpr}
\end{equation}
in the case where $X$ is a single $\ca$-correspondence and $\G$ a locally compact group acting on $X$. Even though this problem is still open in general, two important cases have been worked out: when $\G$ is amenable, by Hao and Ng in their original work~\cite{HN}, and more recently when $\G$ is discrete, by the second author~\cite{KatsIMRN}. In this paper we generalize the discrete reduced Hao-Ng isomorphism to compactly aligned product systems over abelian, lattice ordered groups. This generalizes results from \cite{BKQR, HN, KatsIMRN}.

\vspace{0.1in}

\section{Preliminaries}

\subsection{Operator algebras and $\ca$-envelopes}

We survey non-commutative boundary theory for unital operator algebras, and refer the reader to \cite{Arv69, Arv72, Arv+, BLM04} for a more in-depth treatment of the theory.

Let $\cA$ be an operator algebra. We say that the pair $(\B, \iota)$ is a {\em C*-cover} for $\cA$, if $\B$ is a C*-algebra, $\iota :\cA \rightarrow \cB$ is a completely isometric homomorphism, and $\ca(\iota(\cA)) = \cB$.

There is always a unique, smallest C*-cover for an operator algebra $\cA$. This C*-cover $(C_{e}^*(\cA), \kappa)$ is called the {\em C*-envelope} of $\cA$ and it satisfies the following universal property: given any other C*-cover $(\cB,\iota)$ for $\cA$, there exists a (necessarily unique and surjective) $*$-homomorphism $\pi:\cB \rightarrow C_{e}^*(\cA)$, such that $\pi \circ \iota = \kappa$. We will sometimes identify $\cA$ with its image $\iota(\cA)$ under a given C*-cover $(\B, \iota)$ for $\cA$. We say that $\rho : \A \rightarrow B(\H)$ is a \emph{representation} of $\A$ if $\rho$ is a completely contractive homomorphism.

The existence of the $\ca$-envelope for a \textit{unital} operator algebra was first established in the 70's by Hamana~\cite{Hamana}, following the pioneering work of Arveson~\cite{Arv69}. For a non-unital operator algebra, the existence of the $\ca$-envelope was established much later via unitization, which we now describe.

If $\cA \subseteq B(\cH)$ is a non-unital operator algebra generating a C*-algebra $\cB = \ca(\cA)$, a theorem of Meyer \cite[Section 3]{Mey01} (see also \cite[Corollary 2.1.15]{BLM04}) states that every representation $\varphi: \cA \rightarrow B(\cK)$ extends to a unital representation $\varphi^1$ on the \emph{unitization} $\cA^1=\cA \oplus \bC I_{\cH}$ of $\cA$ by specifying $\varphi^1(a+\lambda I_{\cH}) = \varphi(a) + \lambda I_{\cK}$. Meyer's theorem shows that $\cA$ has a \emph{unique} (one-point) unitization, in the sense that if $(\B, \iota)$ is a C*-cover for the operator algebra $\cA$, and $\cB \subseteq B(\cH)$ is some faithful representation of $\cB$, then the operator-algebraic structure on $\cA^1 \cong \iota(\cA) + \bC 1_{\cH}$ is independent of the C*-cover and the faithful representation of $\cB$.

The C*-envelope of a non-unital operator algebra can be computed from the C*-envelope of its unitization. More precisely, as the pair $(\ca_e(\cA), \iota)$ where $\ca_e(\cA)$ is the C*-subalgebra generated by $\iota(\cA)$ inside the C*-envelope $(\ca_e(\cA^1), \iota)$ of the (unique) unitization $\cA^1$ of $\cA$. By the proof of \cite[Proposition 4.3.5]{BLM04} this C*-envelope of an operator algebra $\cA$ has the desired universal property, that for any C*-cover $(\iota', \cB')$ of $\cA$, there exists a $*$-homomorphism $\pi:\cB' \to \ca_e(\cA)$, such that $\pi \circ \iota' = \iota$.
 
For a not necessary unital operator algebra $\cA$ generating a C*-algebra $\cB$, an ideal $\cJ$ of $\cB$ is called {\em a boundary ideal} for $\cA$ if the quotient map $\cB \to \cB/\cJ$ is a complete isometry on $\cA$. The existence of a $\ca$-envelope for $\A$ implies the existence of a largest boundary ideal $\cJ_{\cA}$ of $\cA$ in $\cB$, which is called {\em the Shilov ideal} of $\cA$ in $\cB$. Its importance in our context is that it gives a way of computing the C*-envelope. Namely, the C*-envelope of $\cA$ is always isomorphic to $\cB / \cJ_{\cA}$.

\subsection{Quasi-lattice ordered groups}

Let $P$ be a generating subsemigroup of a group $G$ such that $P\cap P^{-1} = \{e\}$, where $e$ is the identity element of $G$. Then $P$ induces a partial order on $G$ by defining $s \leq t$ if and only if $s^{-1}t \in P$ which is left-invariant in the sense that if $s\leq t$ then $rs \leq rt$ for $r,s,t \in G$. Following Nica \cite{Nic92}, we say that $(G,P)$ is a \emph{quasi-lattice ordered group} if every finite subset of $G$ with a common upper bound in $P$, has a (necessarily unique) least upper bound in $P$. When $s,t \in G$ have a common upper bound, we will denote their least upper bound by $s\vee t$; and when they do not, we write $s\vee t = \infty$. We note that by \cite[Lemma 7]{CL02}, the pair $(G,P)$ is quasi-lattice ordered if and only every finite subset of $G$ with a common upper bound in $G$ has a least upper bound in $G$. When any finite set in $G$ has both a least upper bound and greatest lower bound, we say that $(G,P)$ is lattice ordered.

Assume now that $(G,P)$ be a quasi-lattice ordered group with $G$ abelian. In that case we have that $G= PP^{-1}$, so that for any two elements $s$ and $t$ of $G$ we have $s\vee t<\infty$; indeed if $s=qp^{-1}$ and $t=q_1p_1^{-1}$ with $p,p_1, q, q_1\in P$, then $qq_1$ is an upper bound for $s$ and $t$.  Furthermore, we may define $s \wedge t= (s^{-1} \vee t^{-1})^{-1}$, $s,t\in G$. Equipped with the operation $\wedge$, it is easy to see that \textit{the abelian quasi-lattice ordered group $(G,P)$ becomes an abelian, lattice ordered group}.  Abelian, lattice ordered groups and product systems associated with them are the main focus of this paper.

\subsection{C*-correspondences}

Here we will give an overview of Hilbert C*-correspondences. For further details and material, we recommend \cite{Lan95}.

Let $\cA$ be a C*-algebra. A right inner product $\cA$-module is a complex vector space $X$ equipped with a right action of $\cA$ and an $\cA$-valued map $\lip \cdot , \cdot \rip : X\times X \rightarrow \cA$  which is $\cA$-linear in the second argument, such that for $x,y \in X$ and $a\in \cA$ we have 
\begin{enumerate}
\item
$\lip x,x\rip \geq 0$
\item
$\lip x,x \rip = 0$ if and only if $x=0$
\item
$\lip x,y \rip = \lip y,x \rip^*$.
\end{enumerate}
When $X$ is complete with respect to the norm given by $\| x \| = \| \lip x,x \rip \|^{\frac{1}{2}}$ we say that $X$ is a Hilbert $\cA$-module.

Let $X$ be a Hilbert $\cA$-module. We say that a map $T : X \rightarrow X$ is adjointable if there is a map $T^* :X \rightarrow X$ such that $\lip Tx,y \rip = \lip x, T^*y \rip$ for every $x,y \in X$. Every adjointable operator is automatically $\cA$-linear and continuous. We denote by $\cL(X)$ the C*-algebra of adjointable operators equipped with the operator norm. For $x,y\in X$ there is a special adjointable operator $\theta_{x,y} \in \cL(X)$ given by $\theta_{x,y}(z) = x \cdot \lip y,z \rip$. We will denote by $\cK(X) \lhd \cL(X)$ the closed ideal of generalized compact operators generated by $\theta_{x,y}$ with $x,y \in X$.

A $\cB$-$\cA$ $\ca$-correspondence is then just a (right) Hilbert $\cA$-module $X$ along with a $*$-homomorphism $\phi : \cB \rightarrow \cL(X)$ which is non-degenerate, i.e., $\overline{\Span} \{ \phi(B)X \}= X$ (this is sometimes called essential). If $X$ is an $\cA$-$\cA$ $\ca$-correspondence we will just call $X$ an $\cA$-correspondence. We think of $\phi$ as implementing a left action of $\cB$ on $X$, and we often write $b x$ for $\phi(b)x$. 

When $X$ is a $\cC$-$\cB$-correspondences and $Y$ a $\cB$-$\cA$-correspondence, we may form the interior tensor product $X \otimes_{\cB} Y$. Indeed, let $X\odot_{\cB} Y$ be the algebraic $\cB$-balanced tensor product. Then the formula
$$
\lip x \odot y, w \odot z \rip := \lip y, \lip x,w \rip \cdot z \rip,
$$
determines an $\cA$-valued sesquilinear form on $X\odot_{\cB} Y$, whose Hausdorff completion $X \otimes_{\cB} Y$ is a (right) Hilbert $\cA$-module. There is then a left $\cC$ action $\cC \rightarrow \cL(X \otimes_{\cB} Y)$ given by $c \cdot (x\odot y) = (c \cdot x) \odot y$ for $x \in X$, $y\in Y$ and $c\in \cC$. We will henceforth denote $X \otimes _{\B} Y$ by $X \otimes Y$ and simple tensors without reference to $\B$, where the algebra $\B$ is understood from context.

\subsection{Product systems over semigroups}

Let $\cA$ be a C*-algebra and $P$ a semigroup with identity $e$. A product system over $P$ with coefficients in $\cA$ is a collection of $\cA$-correspondences $X = (X_p)_{p\in P}$ such that
\begin{enumerate}
\item
$X_e$ is $\cA$ as an $\cA$-correspondence.
\item
For $p,q \in P$, there exists a unitary $\cA$-linear isomorphism $M_{p,q} : X_p \otimes X_q \rightarrow X_{pq}$
\item
The left and right multiplication on each $X_p$ are given via $M_{e,p}$ and $M_{p,e}$ for each $p\in P$ and we also have associativity in the sense that for $p,q,r \in P$,
$$
M_{p,qr}(I_{X_p} \otimes M_{q,r}) = M_{pq,r}(M_{p,q}\otimes I_{X_r})
$$ 
\end{enumerate}

We will denote $M_{p,q}(x\otimes y) = xy \in X_{pq}$ for every $x\in X_p$ and $y\in X_q$. We will also denote by $\phi_p : \cA \rightarrow \cL(X_p)$ the left action on $X_p$ for each $p\in P$. In particular, $\phi_{pq}(a)(xy) = (\phi_p(a)x)y$ for all $p,q\in P$, $a\in \A$ and $x\in X_p$, $y\in X_q$. 

Given $p \in P \setminus \{e\}$ and $q \in P$, the unitary $\cA$-linear map $M_{p,q} : X_p \otimes X_q \rightarrow X_{pq}$ induces a $*$-homomorphism $\iota_p^{pq} : \cL(X_p) \rightarrow \cL(X_{pq})$ via 
$$
\iota_{p}^{pq}(S) := M_{p,q} \circ (S \otimes id_{X_p})M_{p,q}^{-1}
$$
for each $S\in \cL(X_p)$. Alternatively, we have that the $*$-homomorphism $\iota_p^{pq}$ is given by the formula $\iota_p^{pq}(S)(xy) = (Sx)y$ for each $S\in \cL(X_p)$, $x\in X_p$ and $y\in X_q$. For $\iota_e^q$, we first define on $\cA \cong \cK(\cA)$ via $\iota_e^p(a) = \phi_p(a)$, and then extend uniquely to $\cL(\cA)$ via \cite[Proposition 2.5]{Lan95} to obtain a map $\iota_e^q : \cL(\cA) \rightarrow \cL(X_q)$. Finally, for notational purposes, we will define $\iota_p^r$ to be zero whenever $r \neq pq$ for all $q\in P$.

When $X=(X_p)_{p\in P}$ is a product system over a quasi-lattice ordered group $(G,P)$, we will say that $X$ is \emph{compactly aligned} if whenever $S\in \cK(X_p)$ and $T\in \cK(X_q)$ for some $p,q\in P$ with $p \vee q < \infty$, then $S \vee T := \iota_p^{p \vee q}(S) \iota_q^{p \vee q}(T) \in \cK(X_{p\vee q})$.

\subsection{Nica-Toeplitz representations} \label{intro;NC}
We next define representations of compactly aligned product systems over quasi-lattice ordered groups.

\begin{definition} \label{D:isom}
Suppose $(G,P)$ is a quasi-lattice ordered group, and $X$ is a compactly aligned product system over $P$ with coefficients in $\cA$. An \emph{isometric representation} of $X$ into a $\ca$-algebra $\B$ is a map $\psi: X \rightarrow \B$ comprised of linear maps $\psi_p : X_p \rightarrow \B$ for each $p\in P$ such that
\begin{enumerate}
\item
$\psi_e$ is a $*$-homomorphism from $\cA = X_e$ into $\B$.
\item
$\psi_p(x)\psi_q(y) = \psi_{pq}(xy)$ for all $p,q\in P$ and $x \in X_p$, $y\in X_q$.
\item
$\psi_p(x)^*\psi_p(y) = \psi_e(\lip x,y \rip)$ for all $p\in P$ and $x,y\in X_p$.
\end{enumerate}
\end{definition}

It is standard to show that each $\psi_p$ is contractive, and is isometric precisely when $\psi_e$ is injective. We will say that $\psi$ is non-degenerate provided that $\B \subseteq B(\H)$ and $\psi_e$ is non-degenerate. For each $p\in P$ we obtain a $*$-homomorphism $\psi^{(p)} : \cK(X_p) \rightarrow \B$ given by $\psi^{(p)}(\theta_{x,y}) = \psi_p(x)\psi_p(y)^*$ for all $x,y\in X_p$. 

If $X$ is the trivial product system over $(G,P)$, i.e., $X_p=\bbC$, for all $p \in P$, then an isometric representation of $X$ is simply a representation of $P$ as a semigroup of isometries.

We will say that an isometric representation $\psi$ of a compactly aligned product system $X$ is \emph{Nica-covariant} if for any $p,q\in P$ and $S\in \cK(X_p)$, $T\in \cK(X_q)$ we have that
\begin{equation} \label{D:NC1}
\psi^{(p)}(S)\psi^{(q)}(T) = 
\begin{cases}
\psi^{(p\vee q)}(S\vee T) & \text{if } p \vee q < \infty  \\
0 & \text{otherwise.}
\end{cases}
\end{equation}

In the case where $\B \subseteq B(\H)$ is a non-degenerate representation of $\B$, the above definition simplifies when we consider $\psi$ into $B(\cH)$. Indeed by \cite[Proposition 2.5]{Lan95} the $*$-homomorphism $\psi^{(p)}$ admits a unique strict-$\sot$ continuous extension on $\L(X_p)$, which will be still be denoted by $\psi^{(p)}$. Then $\psi: X \rightarrow B(\H)$ is Nica-covariant if and only if for any $p,q\in P$ we have 
\begin{equation} \label{D:NC2}
\psi^{(p)}(I)\psi^{(q)}(I) = 
\begin{cases}
\psi^{(p\vee q)}(I) & \text{if } p \vee q < \infty  \\
0 & \text{otherwise.}
\end{cases}
\end{equation}

This condition has the drawback that it applies only to representations into adjointable operators on Hilbert modules, but has the advantage that it works for arbitrary product systems which are not necessarily compactly aligned. With the advent of compactly aligned systems, condition (\ref{D:NC2}) was replaced by (\ref{D:NC1}). Both conditions are equivalent for concrete representations by \cite[Proposition 5.6]{Fow02}. Finally recall that in the case where $(G,P)$ is an abelian, lattice ordered group, we always have $p\vee q<\infty$ for any $p,q \in P$, so that the formulas simplify in both (\ref{D:NC1}) and (\ref{D:NC2}).

Each product system $X$ has a natural Nica-covariant isometric representation which we now describe. We denote by $\cF_X := \oplus_{p \in P}X_p$ the Fock space of $X$. We then define $l : X \rightarrow \cL(\cF_X)$ by $l_p(x)(y_q)_{q\in P} = (xy_q)_{q\in P}$ for each $p\in P$, $x\in X_p$, and $(y_q)_{q\in P} \in \cF_X$. We call $l$ the Fock representation, which is an isometric Nica-covariant representation of $X$ by \cite[Lemma 5.3]{Fow02}.

We denote by $\cN \cT_X$ the universal C*-algebra generated by a Nica-co\-variant representation of $X$, which exists due to \cite[Theorem 6.3]{Fow02}. Hence, there is an isometric Nica-covariant representation $i_X : X \rightarrow \cN \cT_X$ such that $\cN \cT_X$ is generated by the image of $i_{X}$ and for any other isometric Nica-covariant representation $\psi : X \rightarrow B(\cH)$ there exists a $*$-homomorphism $\psi_* : \cN \cT_X \rightarrow B(\cH)$ such that $\psi_* \circ i_{X,p} = \psi_p$, for every $p\in P$.

\begin{definition}
Let $\B$ be a C*-algebra and $G$ a discrete group. A (full) \emph{coaction} on $\B$ is an injective $*$-homomorphism $\delta : \B \rightarrow \B \otimes C^*(G)$ such that
\begin{enumerate}
\item $(\delta \otimes id) \delta = (id \otimes \delta_G)\delta$

\item $\delta(\B)(1 \otimes C^*(G))$ is dense inside $B \otimes C^*(G)$.
\end{enumerate}
where $\delta_G : C^*(G) \rightarrow C^*(G) \otimes C^*(G)$ is the $*$-homomorphism satisfying $\delta_G(i_G(g)) = i_G(g) \otimes i_G(g)$ for $g \in G$.
\end{definition}

By \cite[Proposition 3.5]{CLSV11} (see also \cite[Proposition 4.7]{Fow02}) there is a canonical gauge coaction $\delta_X : \cN \cT_X \rightarrow \cN \cT_X \otimes \ca(G)$ given by $\delta_X(i_{X,p}(x)) = i_{X, p}(x) \otimes i_G(p)$ for each $p\in P$ and $x\in X_p$. We will say that a representation $\psi : X \rightarrow B(\cH)$ is \emph{gauge-compatible} if there is a coaction $\delta$ of $G$ on $\ca(\{\psi_p(X_p)\}_{p\in P})$ such that $\delta(\psi_p(x)) = \psi_p(x) \otimes i_G(p)$ for all $x\in X_p$ and $p\in P$. When $\psi$ is gauge-compatible, the induced $*$-homomorphism $\psi_* : \cN \cT_X \rightarrow \ca(\{\psi_p(X_p)\}_{p\in P})$ will then be equivariant in the sense that $(\psi_* \otimes \id) \circ \delta_X = \delta \circ \psi_*$.

Since we are assuming in this paper that $G$ is abelian the concept of gauge compatibility reduces to a more familiar one.

\begin{proposition} \label{P:gauge}
Let $(G,P)$ be an abelian, lattice ordered group, and $X$ a compactly aligned product system over $P$ with coefficients in $\cA$. Then a representation $\psi : X \rightarrow B(\cH)$ is gauge-compatible if and only if there exists a gauge action 
$$\hat{\alpha}_X \colon \hat{G} \longrightarrow \Aut\big(\ca(\{\psi_p(X_p)\}_{p\in P})\big)$$ such that $\hat{\alpha}_{X, \gamma}(\psi_p(x)) =\gamma(p) \psi_p(x) $ for all $x\in X_p$, $p\in P$ and $\gamma \in \hat{G}$, the Pontryagin dual of $G$. 
\end{proposition}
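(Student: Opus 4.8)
The plan is to exploit the classical duality between coactions of a discrete abelian group and actions of its compact Pontryagin dual. Write $B := \ca(\{\psi_p(X_p)\}_{p\in P})$ and recall that, $G$ being discrete and abelian, the Gelfand transform identifies $\ca(G)$ with $C(\hat{G})$ in such a way that $i_G(p)$ corresponds to the evaluation function $\gamma\mapsto\gamma(p)$; since $C(\hat{G})$ is nuclear this yields a canonical identification $B\otimes\ca(G)\cong C(\hat{G},B)$ of $C^*$-algebras. Each $\gamma\in\hat{G}$ determines a character $\ev_\gamma\colon\ca(G)\to\bC$ with $\ev_\gamma(i_G(p))=\gamma(p)$, and throughout I will verify identities only on the generators $\psi_p(x)$, $p\in P$, $x\in X_p$, extending them to all of $B$ using that both sides are $*$-homomorphisms and that these elements generate $B$ as a $C^*$-algebra.

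For the forward implication, assume $\psi$ is gauge-compatible with coaction $\delta$ and set $\hat{\alpha}_{X,\gamma}:=(\id\otimes\ev_\gamma)\circ\delta\colon B\to B$ for $\gamma\in\hat{G}$. Each $\hat\alpha_{X,\gamma}$ is a $*$-homomorphism, and from $\delta(\psi_p(x))=\psi_p(x)\otimes i_G(p)$ one reads off $\hat\alpha_{X,\gamma}(\psi_p(x))=\gamma(p)\psi_p(x)$, the desired formula. Checking on generators shows that $\hat\alpha_{X,\gamma}\circ\hat\alpha_{X,\gamma'}$ and $\hat\alpha_{X,\gamma\gamma'}$ agree, so $\gamma\mapsto\hat\alpha_{X,\gamma}$ is multiplicative; since $\hat\alpha_{X,\mathbf{1}}$ fixes every generator it equals $\id_B$, whence $\hat\alpha_{X,\gamma^{-1}}$ is a two-sided inverse of $\hat\alpha_{X,\gamma}$ and each $\hat\alpha_{X,\gamma}$ is an automorphism. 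Point-norm continuity is first checked on the dense $*$-subalgebra of words in the generators, where $\gamma\mapsto\hat\alpha_{X,\gamma}(b)$ is a finite product of the continuous characters $\gamma\mapsto\gamma(p)$ times $b$, and is then propagated to all of $B$ by an $\varepsilon/3$ estimate using $\|\hat\alpha_{X,\gamma}\|\le 1$.

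For the converse, given $\hat\alpha_X$ I would define $\delta\colon B\to C(\hat{G},B)\cong B\otimes\ca(G)$ by $\delta(b)(\gamma):=\hat\alpha_{X,\gamma}(b)$; point-norm continuity of $\hat\alpha_X$ guarantees that $\delta(b)$ is a continuous $B$-valued function, and $\delta$ is a $*$-homomorphism because it is one pointwise. Under the identification $\delta(\psi_p(x))$ becomes the function $\gamma\mapsto\gamma(p)\psi_p(x)$, i.e.\ $\psi_p(x)\otimes i_G(p)$, as required; injectivity follows by evaluating at $\gamma=\mathbf{1}$, where $\hat\alpha_{X,\mathbf{1}}=\id_B$, and the coaction identity $(\delta\otimes\id)\delta=(\id\otimes\delta_G)\delta$ is verified on generators using $\delta_G(i_G(p))=i_G(p)\otimes i_G(p)$. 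The one genuinely non-formal point is the nondegeneracy condition $\overline{\delta(B)(1\otimes\ca(G))}=B\otimes\ca(G)$: here I would observe that $\delta(\psi_p(x))\big(1\otimes i_G(p^{-1}g)\big)=\psi_p(x)\otimes i_G(g)$ for every $g\in G$, and more generally that $\delta(b)\big(1\otimes i_G(g)\big)$ produces $b\otimes i_G(g')$ for any word $b$ in the generators and any $g'\in G$; since such elements have dense linear span in $B\otimes\ca(G)$, nondegeneracy follows.

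I expect the main obstacle to be bookkeeping rather than conceptual: fixing the conventions in the duality $\ca(G)\cong C(\hat{G})$ consistently (so that $i_G(p)$ corresponds to $\gamma\mapsto\gamma(p)$ and $\mathbf{1}\in\hat{G}$ to the counit), together with establishing nondegeneracy of the reconstructed coaction, are the two places where a little care is needed; everything else reduces to checking $*$-homomorphism identities on the generating set $\{\psi_p(x)\}$.
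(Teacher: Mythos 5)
Your proposal is correct and follows essentially the same route as the paper's proof: both directions use the Fourier identification $\ca(G)\cong \mathrm{C}(\hat{G})$, slicing the coaction by evaluations $\ev_\gamma$ to produce $\hat{\alpha}_{X,\gamma}$, and conversely assembling $\gamma\mapsto\hat{\alpha}_{X,\gamma}(b)$ into an element of $\mathrm{C}(\hat{G},\B)\cong\B\otimes \mathrm{C}(\hat{G})$ and composing with $\id\otimes F^{-1}$. The only difference is that you spell out the verifications (automorphism property, point-norm continuity, injectivity, the coaction identity, and nondegeneracy) that the paper leaves implicit, and these checks are all sound.
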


\begin{proof}
For notational simplicity, let $\B := \ca(\{\psi_p(X_p)\}_{p\in P})$. Assume first that $\psi$ is gauge-compatible via a coaction $\delta$ of $G$. Let $$F \colon \ca(G) \longrightarrow \mathrm{C}(\hat{G})$$ be the Fourier transform. Then for each $\gamma \in \hat{G}$, the desired $\hat{\alpha}_{X, \gamma}\colon \B \rightarrow \B$ is given by the composition 
\[\xymatrix{
\B \ar[r]^{\delta\phantom{000000}}  & \B \otimes \ca(G)\ar[r]^{\id\otimes F} &  \B \otimes \mathrm{C}(\hat{G})\ar[r]^{\phantom{0000}\id\otimes e_{\gamma}}& \B,}
\]
where $e_{\gamma}\colon \mathrm{C}(\hat{G})\rightarrow \bbC$ denotes the evaluation at $\gamma \in \hat{G}$.

Conversely, assume that $\psi$ admits a gauge action $\hat{\alpha}_X \colon \hat{G} \longrightarrow \Aut(\B)$ with $\hat{\alpha}_{X, \gamma}(\psi_p(x)) =\gamma(p) \psi_p(x) $ for all $x\in X_p$, $p\in P$ and $\gamma \in \hat{G}$. Then the desired coaction $\delta$ comes from the composition 
\[\xymatrix{
\B \ar[r]^{R\phantom{000000}}  & \B \otimes \mathrm{C}(\hat{G}) \ar[r]^{\id\otimes F^{-1}} &  \B \otimes \ca(G),}
\]
where $R \colon \B \rightarrow  \B \otimes \mathrm{C}(\hat{G}) $ is defined by $R(b) (\gamma) := \hat{\alpha}_{X,\gamma}(b)$ for $b \in \B$, $\gamma \in \hat{G}$ and we identify $\B \otimes \mathrm{C}(\hat{G})$ with $\mathrm{C}(\hat{G},\B)$.
\end{proof}

In \cite[Theorem 7.2]{Fow02} Fowler provides a condition on an isometric Nica-covariant representation $\psi$ to induce a faithful $*$-representation $\psi_*$ of $\cN \cT_X$. As we saw earlier, for an isometric Nica-covariant representation $\psi$ we have the projections $\{\psi^{(p)}(I)\}_{p\in P}$. Each $\psi^{(p)}(I)$ is the SOT limit of (any) contractive approximate identity for $\psi^{(p)}(\cK(X_p))$. Furthermore, since each $X_p$ is essential, $\psi^{(p)}(I)$ projects onto $\psi_p(X_p)\cH$. Hence, we get the following variant of Fowler's result, which we state for abelian, lattice ordered groups.

\begin{theorem}[Fowler] \label{T:toep-uniq}
Suppose $(G,P)$ is an abelian, lattice ordered group, $X$ is a compactly aligned product system over $P$ with coefficients in $\cA$, and $\psi :X \rightarrow B(\cH)$ an isometric Nica-covariant representation. Suppose that for any finite set $F\subseteq P \setminus \{e\}$ the representation $A \rightarrow B(\cH)$ given by
$$
a \mapsto \psi_e(a) \prod_{p\in F} \big(I_{\cH} -\psi^{(p)}(I)\big)
$$
is faithful. Then the induced $*$-representation $\psi_* : \cN \cT_X\rightarrow B(\H)$ is faithful.
\end{theorem}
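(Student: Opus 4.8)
The plan is to establish faithfulness of $\psi_*$ via a conditional-expectation/gauge-action argument, which is the standard route for gauge-invariant uniqueness theorems in this setting. Since $(G,P)$ is abelian and lattice ordered, the canonical gauge coaction $\delta_X$ on $\N\T_X$ corresponds (by Proposition \ref{P:gauge}) to a gauge action of the compact dual group $\hat G$, and averaging over $\hat G$ against Haar measure yields a faithful conditional expectation $E \colon \N\T_X \to (\N\T_X)^{\hat G}$ onto the fixed-point algebra, the \emph{core}. First I would reduce the problem to injectivity on this core: because $E$ is faithful, a $*$-homomorphism out of $\N\T_X$ that is equivariant is injective if and only if it is injective on the fixed-point algebra. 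The hypothesis on $\psi$ must therefore be leveraged to prove injectivity of $\psi_*$ restricted to the core.

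Next I would analyze the structure of the core. The fixed-point algebra is the closed span of monomials $i_{X,p}(x) i_{X,q}(y)^*$ with $p=q$, and Nica covariance organizes these into a filtered system indexed by finite subsets $F \subseteq P$. Concretely, for each finite $F$ one has a finite-dimensional-in-$P$ piece built from the compacts $\psi^{(p)}(\K(X_p))$ together with the complementary projections $I_\H - \psi^{(p)}(I)$; the core is an inductive limit of these pieces. The key algebraic identity I would exploit is that, writing $Q_p := I_\H - \psi^{(p)}(I)$, the products $\psi_e(a)\prod_{p\in F} Q_p$ appearing in the statement are exactly the images under $\psi$ of the natural corner projections that disentangle the overlapping ranges of the $\psi^{(p)}(I)$ coming from Nica covariance. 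I would show that faithfulness of each such corner map, which is precisely the stated hypothesis, propagates to faithfulness on each finite stage of the core by an induction on $|F|$, using the Nica-covariance relation (\ref{D:NC1}) to express a general core element in terms of these disjointified corners.

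The induction step is where the main work lies. Given the hypothesis that $a \mapsto \psi_e(a)\prod_{p\in F} Q_p$ is faithful for every finite $F$, I would argue that the map $\psi^{(p)} \colon \K(X_p) \to \B$ is faithful for each $p$ (the case $F=\{p\}$ combined with essentiality of $X_p$), and then bootstrap: an element of the $F$-stage core that vanishes under $\psi_*$ can be decomposed, via the projections $\psi^{(p)}(I)$ and their complements, into a piece supported on the ``deepest'' corner $\prod_{p\in F}Q_p$ and pieces living in strictly smaller stages. Applying the inductive hypothesis kills the smaller-stage pieces, and the faithfulness hypothesis kills the deepest corner, giving injectivity at stage $F$. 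Passing to the inductive limit yields injectivity on the core, and combining with faithfulness of $E$ gives the result.

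The hard part will be the bookkeeping in the decomposition of a core element into disjoint corners and the verification that the Nica-covariance relation genuinely makes these corners orthogonal, so that the induction closes cleanly; in particular one must be careful that the complementary projections $Q_p$ commute appropriately and that the tensorial structure of the $X_p$ under $\iota_p^{p\vee q}$ is respected when rewriting products $\psi^{(p)}(S)\psi^{(q)}(T)$. Since this is a restatement of Fowler's theorem \cite[Theorem 7.2]{Fow02} specialized to the abelian lattice-ordered case, I expect the cleanest exposition to invoke Fowler's result directly and merely verify that the stated faithfulness hypothesis matches his, rather than reproving the inductive machinery from scratch.
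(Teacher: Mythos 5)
Your closing move is exactly what the paper does: the paper offers no internal proof of this statement at all, but records it as a direct specialization of \cite[Theorem 7.2]{Fow02}, the only work being the dictionary spelled out in the sentences immediately preceding the theorem --- namely that for a nondegenerate isometric Nica-covariant $\psi$ the strict extension $\psi^{(p)}(I)$ is the SOT limit of an approximate unit of $\psi^{(p)}(\K(X_p))$ and projects onto $\psi_p(X_p)\H$, so that the stated corner-faithfulness hypothesis coincides with Fowler's. Your final paragraph is therefore the right (and the paper's) exposition.

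However, the standalone argument you sketch in the first three paragraphs has a genuine gap at the very first reduction. You propose to deduce injectivity of $\psi_*$ from injectivity on the core because ``a $*$-homomorphism out of $\N\T_X$ that is equivariant is injective if and only if it is injective on the fixed-point algebra.'' But $\psi$ is \emph{not} assumed gauge-compatible here: nothing in the hypotheses provides a gauge action (equivalently, by Proposition \ref{P:gauge}, a coaction) on $\ca(\{\psi_p(X_p)\}_{p\in P})$, so $\psi_*$ need not be equivariant and there is no conditional expectation on the image through which to push faithfulness of $E$. Indeed, the whole point of Fowler's theorem --- and the way it is deployed in this paper, e.g., inside the proof of Lemma \ref{lemma;NOr}, where gauge compatibility is a separate hypothesis handled by a different device --- is that the corner-faithfulness condition \emph{substitutes} for a gauge action on the image. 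Fowler's actual proof avoids your reduction by working spatially: compressing a finite sum of monomials by the commuting projections $\prod_{p\in F}\big(I-\psi^{(p)}(I)\big)$ isolates its diagonal part, and the faithfulness hypothesis converts this into the estimate $\|\psi_*(x)\|\geq\|E(x)\|$, where $E$ is the (faithful) expectation on $\N\T_X$ itself, from which injectivity follows. Your disjointification bookkeeping is otherwise sound --- the $\psi^{(p)}(I)$ do commute, since $\psi^{(p)}(I)\psi^{(q)}(I)=\psi^{(p\vee q)}(I)$ is symmetric in $p$ and $q$ --- but as written your induction would establish injectivity only on the core, with no licit way to close the argument on all of $\N\T_X$.
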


As an easy application of this theorem, one can show that the $*$-homo\-morphism $l_* : \cN \cT_X \rightarrow \cL(\cF_X)$ induced from the Fock representation $l$ is faithful.

\section{The C*-envelope of the Nica tensor algebra} \label{Sec:envelope}

In this section we characterize the C*-envelope of the Nica tensor algebra $\cN \cT^+_X$ of a product system $X$ over a (discrete) abelian, lattice ordered group $(G,P)$. The Nica tensor algebra of the product system $X$ is the norm closed algebra
$$
\cN \cT^+_X := \overline{\alg} \{ (i_{X,p})(X_p) \mid p \in P\}.
$$
By the remark following Theorem~\ref{T:toep-uniq}, $\N\T^+_X$ is completely isometrically isomorphic to the non-selfad\-joint operator algebra generated by the image of the Fock representation $l \colon X \rightarrow \L(\F_X)$. It is naturally a subalgebra of $\cN \cT_X$, and is also the universal norm-closed operator algebra generated by a Nica-covariant isometric representation of $X$.

In \cite{CLSV11} the authors investigate the existence of a pair $(\N\O_X^r, j)$ which is co-universal for isometric, Nica-covariant, gauge-compatible representations of $X$ in the following sense: $\N\O_X^r$ is a $\ca$-algebra and $j : X \rightarrow \N\O_X^r$ is a Nica-covariant representation satisfying the following properties
\begin{enumerate}
\item 
$j_e$ is faithful,
\item
$j_*$ is a gauge-compatible surjection, where $j_* : \cN \cT_X \rightarrow \N\O_X^r$ is the canonical $*$-homomorphism induced by $j$, and
\item for any gauge-compatible Nica-covariant isometric representation $\psi : X \rightarrow \B$ for which $\psi_e$ is faithful, there is a surjective $*$-homomor- phism $q : \ca (\{\psi_p(X_p)\}_{p \in P}) \rightarrow \N\O_X^r$ such that $q \circ \psi_p(\xi) = j_p(\xi)$, for all $\xi \in X_p$ and $p \in P$.
\end{enumerate}

It goes without saying that the existence of such a pair $(\N\O_X^r, j)$ is not immediate; nevertheless if it exists then it is unique \cite[Theorem 4.1]{CLSV11}. Most of \cite{CLSV11} is devoted to showing that $(\N\O_X^r, j)$ exists in many cases. However many cases are also left open in \cite{CLSV11}; in particular, the case for product systems over abelian, lattice ordered groups is not completely resolved. Following a suggestion by Hangfeng Li, the authors of \cite{CLSV11} also raise the possibility that the existence of $(\N\O_X^r, j)$ is somehow related to Arveson's concept of a $\ca$-envelope. However this point is also not clarified in \cite{CLSV11}. In what follows the pair $(\N\O_X^r, j)$ will be simply denoted as $\N\O_X^r$.

An algebra closely related to $\N\O_X^r$ is the $\ca$-algebra $\A \times_X P$ of Sehnem \cite{Seh+}. Specialized to compactly aligned product systems, the algebra $\A \times_X P$ is determined by the following properties: there exists a Nica-covariant representation $j': X \rightarrow \A \times_X P$ that induces a canonical $*$-homomorphism $j'_* : \N \T_X \rightarrow \A \times_X P$ so that
\begin{itemize}
\item[(A)] $j'_*$ restricts to an injection on $X_e = \A$, and
\item[(B)] any representation of $\A \times_X P$ which is faithful on $j'_*(\A)$ is also faithful on the fixed point algebra $(\A \times_X P)^{\delta}$, where $\delta$ is the canonical gauge coaction of $G$ on $\A \times_X P$.
\end{itemize}

The existence of such an algebra was originally hypothesized by Sims and Yeend in \cite{SY11}. The motivation came from the various gauge invariant uniqueness theorems, which are an important tool for recolonizing such algebras uniquely in concrete situations. Subsequently, the existence of such an algebra was established in many important cases, including $(G,P)=(\bbZ^d,\bbN^d)$, by Carlsen, Larsen, Sims and Vittadello in \cite{CLSV11}. In complete generality, the existence of $\A \times_X P$ was verified recently by Sehnem \cite{Seh+}. Sehnem defines the algebra $\A \times_X P$ as a universal object for a particular class of representations of $X$ that she calls strongly covariant and then establishes properties (A) and (B) in \cite[Theorem 3.10]{Seh+}. This approach has the additional advantage that $\A \times_X P$ admits a more concrete presentation than that of $\N\O_X^{\,r}$.

Let $(G,P)$ be an abelian, lattice ordered group. If $\psi : X \rightarrow B(\cH)$ and $V: P \rightarrow B(\cK)$ are isometric representations \footnote{As we pointed out in Subsection~\ref{intro;NC}, by $V: P \rightarrow B(\cK)$ an isometric representation we simply mean a representation of $P$ by isometries on $\K$.}, then we define a representation 
\begin{equation} \label{eq:tensor rep}
\psi \otimes V : X \longrightarrow B(\cH \otimes \cK); \, X_p\ni x \longmapsto \psi_p(x) \otimes V_p, \quad p \in P.
\end{equation}
Clearly $\psi \otimes V$ is an isometric representation of $X$.

\begin{lemma} \label{L:Nicatensor}
Let $(G,P)$ be an abelian, lattice ordered group and let $X$ be a compactly aligned product system over $P$ with coefficients in $\cA$. Assume that $\psi : X \rightarrow B(\cH)$ and $V: P \rightarrow B(\cK)$ are isometric representations. 
\begin{enumerate}
\item[(a)]
If both $\psi$ and $V$ are Nica-covariant then $\psi \otimes V$ is Nica-covariant.
\item[(b)]
If $\psi$ is a Nica-covariant representation and $U$ is a unitary representation of $P$, then $\psi \otimes U$ is Nica-covariant.
\end{enumerate}
\end{lemma}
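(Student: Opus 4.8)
The plan is to reduce everything to the behaviour of the associated compact-operator representations $(\psi\otimes V)^{(p)}$ and to recognise that the tensor factor coming from $V$ is governed exactly by the Nica-covariance of $V$ for the trivial product system. First I would compute $(\psi\otimes V)^{(p)}$ on rank-one operators: for $x,y\in X_p$,
\[
(\psi\otimes V)^{(p)}(\theta_{x,y}) = (\psi_p(x)\otimes V_p)(\psi_p(y)\otimes V_p)^* = \psi^{(p)}(\theta_{x,y})\otimes V_pV_p^*.
\]
Since the $\theta_{x,y}$ span a dense subset of $\cK(X_p)$ and both sides extend to $*$-homomorphisms on $\cK(X_p)$ (here $V_pV_p^*$ is the range projection of the isometry $V_p$, so $S\mapsto \psi^{(p)}(S)\otimes V_pV_p^*$ is a $*$-homomorphism), I extend by linearity and continuity to obtain
\[
(\psi\otimes V)^{(p)}(S) = \psi^{(p)}(S)\otimes V_pV_p^*, \qquad S\in\cK(X_p).
\]

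For part (a), I then multiply two such terms. Using $(a\otimes b)(c\otimes d)=ac\otimes bd$, for $S\in\cK(X_p)$ and $T\in\cK(X_q)$ I get
\[
(\psi\otimes V)^{(p)}(S)\,(\psi\otimes V)^{(q)}(T) = \big(\psi^{(p)}(S)\psi^{(q)}(T)\big)\otimes\big(V_pV_p^*V_qV_q^*\big).
\]
Now the two tensor factors are handled separately. Nica-covariance of $\psi$ gives $\psi^{(p)}(S)\psi^{(q)}(T)=\psi^{(p\vee q)}(S\vee T)$ (recall $p\vee q<\infty$ always in the lattice-ordered case). Viewing $V$ as an isometric representation of the trivial product system $X_p=\bC$, its Nica-covariance is precisely the statement $V_pV_p^*V_qV_q^*=V_{p\vee q}V_{p\vee q}^*$, since there $V^{(p)}(I)=V_pV_p^*$ and $I\vee I=I$. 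Substituting both factors and re-applying the displayed formula for $(\psi\otimes V)^{(p\vee q)}$ yields
\[
(\psi\otimes V)^{(p)}(S)\,(\psi\otimes V)^{(q)}(T) = \psi^{(p\vee q)}(S\vee T)\otimes V_{p\vee q}V_{p\vee q}^* = (\psi\otimes V)^{(p\vee q)}(S\vee T),
\]
which is exactly the Nica-covariance condition (\ref{D:NC1}) for $\psi\otimes V$.

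For part (b) I observe that a unitary representation $U$ of $P$ has $U_pU_p^*=I$ for every $p$, so $U_pU_p^*U_qU_q^*=I=U_{p\vee q}U_{p\vee q}^*$; that is, $U$ is automatically Nica-covariant, and (b) becomes the special case of (a) with $V=U$. I do not expect a serious obstacle: the only point requiring care is the justification that the identity $(\psi\otimes V)^{(p)}(S)=\psi^{(p)}(S)\otimes V_pV_p^*$ persists from rank-one operators to all of $\cK(X_p)$, which is a routine density-and-continuity argument once both maps are seen to be $*$-homomorphisms; after that the computation is purely algebraic, splitting cleanly into the $\psi$-factor (controlled by Nica-covariance of $\psi$) and the $V$-factor (controlled by Nica-covariance of $V$, or triviality of range projections in the unitary case).
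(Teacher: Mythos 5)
Your proposal is correct and takes essentially the same approach as the paper: both establish the identity $(\psi\otimes V)^{(p)}(S)=\psi^{(p)}(S)\otimes V_pV_p^*$ from the rank-one computation and then factor the product into the $\psi$-part (handled by Nica-covariance of $\psi$) and the $V$-part (handled by Nica-covariance of $V$, which as you note is trivially satisfied when $V=U$ is unitary). The only cosmetic difference is that the paper verifies the equivalent concrete condition (\ref{D:NC2}) at the strictly extended unit $I$ via approximation in the strict topology, while you verify (\ref{D:NC1}) directly on all compact operators by a density-and-continuity argument.
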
 

\begin{proof}  
As for $(a)$, according to (\ref{D:NC2}), and as any two elements in $P$ have a least upper bound, we need to verify that
$$
(\psi\otimes V)^{(p)}(I)(\psi \otimes V)^{(q)}(I) = 
(\psi \otimes V)^{(p\vee q)}(I)
$$
Now notice that for any $\xi,\eta \in X_p$ we have that
\begin{align*}
(\psi\otimes V)^{(p)}(\theta_{\xi,\eta}) = \psi_p(\xi)\psi_p(\eta)^* \otimes V_pV_p^* = \psi^{(p)}(\theta_{\xi,\eta}) \otimes V_pV_p^*.
\end{align*}
By taking linear combinations and approximating in the strict topology, we get $(\psi\otimes V)^{(p)}(I) = \psi^{(p)}(I)\otimes V_pV_p^*$, and similarly $(\psi\otimes V)^{(q)}(I) = \psi^{(q)}(I)\otimes V_qV_q^*$. Therefore we have
\begin{align*}
(\psi\otimes V)^{(p)}(I)(\psi \otimes V)^{(q)}(I) &= \psi^{(p)}(I)\psi^{(q)}(I)\otimes V_pV_p^*V_qV_q^* \\
				&=\psi^{(p \vee q)}(I) \otimes V_{p\vee q}V^*_{p \vee q} \\
				&= (\psi \otimes V)^{(p\vee q)}(I) 
\end{align*}
as desired. For a proof of $(b)$, since $(\psi \otimes U)^{(p)} = \psi^{(p)} \otimes I_{\cK}$, the conclusion follows as before. 
\end{proof}

\begin{lemma} \label{lemma;NOr}
Let $(G,P)$ be an abelian, lattice ordered group and let $X$ be a compactly aligned product system over $P$. Suppose $\psi: X \rightarrow \B$ is an isometric Nica-covariant representation of $X$ into a $\ca$-algebra $\B$ such that 
\begin{enumerate}
\item 
$\psi_e$ is faithful, and
\item
$\psi$ is gauge-compatible.
\end{enumerate}
Then $\psi_*$ is completely isometric on $\cN \cT_X^+$. 
\end{lemma}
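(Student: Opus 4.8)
The plan is to compare $\psi_*$ with a representation that is \emph{faithful} on all of $\cN\cT_X$, and then transport the resulting isometry back to $\psi_*$ using the gauge coaction. Let $V\colon P\to B(\ell^2(P))$ be the left regular representation of $P$, so that $V_p\epsilon_q=\epsilon_{pq}$ on the standard orthonormal basis $\{\epsilon_q\}_{q\in P}$. This is an isometric, Nica-covariant representation of $P$: the projection $V_pV_p^*$ is onto $\overline{\spn}\{\epsilon_q: q\in pP\}$, and $pP\cap qP=(p\vee q)P$ gives Nica covariance. By Lemma~\ref{L:Nicatensor}(a), $\psi\otimes V$ is an isometric Nica-covariant representation of $X$, inducing a $*$-homomorphism $(\psi\otimes V)_*\colon\cN\cT_X\to B(\cH\otimes\ell^2(P))$.

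First I would check that $\psi\otimes V$ satisfies the hypothesis of Fowler's Theorem~\ref{T:toep-uniq}, which is where the faithfulness of $\psi_e$ enters. Fix a finite $F\subseteq P\setminus\{e\}$. Since $(\psi\otimes V)_e(a)=\psi_e(a)\otimes I$ and $(\psi\otimes V)^{(p)}(I)=\psi^{(p)}(I)\otimes V_pV_p^*$, applying the operator $a\mapsto(\psi\otimes V)_e(a)\prod_{p\in F}\big(I-(\psi\otimes V)^{(p)}(I)\big)$ to a vector $\xi\otimes\epsilon_e$ returns $\psi_e(a)\xi\otimes\epsilon_e$, because $V_pV_p^*\epsilon_e=0$ for every $p\neq e$ (as $e\notin pP$). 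Hence vanishing of this operator forces $\psi_e(a)=0$, and faithfulness of $\psi_e$ gives $a=0$. By Theorem~\ref{T:toep-uniq}, $(\psi\otimes V)_*$ is a faithful $*$-homomorphism on $\cN\cT_X$, hence completely isometric there, in particular on $\cN\cT^+_X$.

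It then remains to prove $\|\psi_*^{(n)}(a)\|\ge\|(\psi\otimes V)_*^{(n)}(a)\|$ for $a\in M_n(\cN\cT^+_X)$; the reverse is automatic since $\psi_*$ is a $*$-homomorphism, hence completely contractive. Here I would invoke gauge-compatibility (hypothesis (2)): there is a coaction $\delta\colon\B\to\B\otimes\ca(G)$ with $\delta(\psi_p(x))=\psi_p(x)\otimes i_G(p)$. Composing with $\id\otimes\lambda$, where $\lambda\colon\ca(G)\to B(\ell^2(G))$ is the left regular representation of $G$, yields a $*$-homomorphism $\Psi:=(\id\otimes\lambda)\circ\delta$ with $\Psi(\psi_p(x))=\psi_p(x)\otimes\lambda_p$. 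Let $E\colon\ell^2(G)\to\ell^2(P)$ be the orthogonal projection. Since $\lambda_g$ leaves $\ell^2(P)$ invariant for every $g\in P$, we have $E\lambda_gE=V_g$ for $g\in P$, so on a product of positive-degree generators $a=i_{X,p_1}(x_1)\cdots i_{X,p_k}(x_k)$ one computes $\Psi(\psi_*(a))=\big(\psi_{p_1}(x_1)\cdots\psi_{p_k}(x_k)\big)\otimes\lambda_{p_1\cdots p_k}$, whence (by linearity and continuity, extending to all of $\cN\cT^+_X$)
\begin{equation*}
(I\otimes E)\,\Psi(\psi_*(a))\,(I\otimes E)=(\psi\otimes V)_*(a).
\end{equation*}
As $\Psi$ is a $*$-homomorphism and compression by $I\otimes E$ is completely contractive, this yields $\|(\psi\otimes V)_*^{(n)}(a)\|\le\|\Psi^{(n)}(\psi_*^{(n)}(a))\|\le\|\psi_*^{(n)}(a)\|$, the needed inequality. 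Combining the two bounds shows $\psi_*$ is completely isometric on $\cN\cT^+_X$.

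The main obstacle, I expect, is pinning down the auxiliary representation $V$ so that it simultaneously (a) keeps $\psi\otimes V$ Nica-covariant, (b) satisfies Fowler's condition via a vacuum vector $\epsilon_e$ annihilated by all $V_pV_p^*$ with $p\neq e$, and (c) arises as the compression of the unitaries $\lambda_p$ implementing the coaction. The left regular representation of $P$ is exactly what reconciles all three; the technical heart is the compression identity, where one must use that $\lambda_g$ preserves $\ell^2(P)$ \emph{only} for $g\in P$ — which is precisely why the argument is confined to the tensor algebra $\cN\cT^+_X$ and would break for adjoints.
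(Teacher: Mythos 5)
Your proposal is correct and follows essentially the same route as the paper's proof: the paper likewise applies Fowler's Theorem~\ref{T:toep-uniq} to $\psi\otimes V$ (with $V$ the left regular representation of $P$ and the vacuum vector $\epsilon_e$ killing each $(\psi\otimes V)^{(p)}(I)$, $p\neq e$), and then recovers $(\psi\otimes V)_*$ on $\cN\cT_X^+$ as the compression by $I\otimes Q$ of the coaction-twisted representation $x\mapsto\psi_p(x)\otimes U_p$, exactly your $(I\otimes E)\,\Psi(\psi_*(\cdot))\,(I\otimes E)$ identity. The only cosmetic difference is that you make the composition with the left regular representation of $G$ explicit via $\Psi=(\id\otimes\lambda)\circ\delta$, where the paper phrases the same step through Lemma~\ref{L:Nicatensor}(b) and the identity $(\psi\otimes U)_*=(\psi_*\otimes\id)\circ\delta_X$.
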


\begin{proof}
By representing $\B$ appropriately on a Hilbert space $\H$ we may assume that $\psi \colon X \rightarrow B(\H)$ is non-degenerate. Since $\psi$ is gauge-compatible, there is a $*$-homomorphism
\[\delta : \ca(\{\psi(X_p)\}_{p\in P}) \rightarrow \ca(\{\psi(X_p)\}_{p\in P}) \otimes \ca(G)
\]
 such that $\delta(\psi_p(x)) = \psi_p(x) \otimes U_p$, $p \in P$, where $g \mapsto U_g$ is the left regular representation of $G$ on $\ell^2(G)$. Hence, we have the commutative diagram
\[
\xymatrix{ \N\T_X \ar[d]^{\psi_*} \ar[rr]^{\delta_X \phantom{cccccccc}} & &  \N\T_X \otimes \ca(G) \ar[d]^{\psi_*\otimes\id}\\
\ca(\{\psi(X_p)\}_{p\in P})  \ar[rr]_{\delta\phantom{ccccccccc}}  && \ca(\{\psi(X_p)\}_{p\in P}) \otimes  \ca(G))}
\]
In order to prove that  $\psi_*$ is completely isometric on $\cN \cT_X^+$, it is therefore sufficient to show that $(\psi_* \otimes \id) \circ \delta_X$ is completely isometric on $\cN \cT_X^+$. 

Consider the isometric representation 
\[
\psi\otimes V \colon X \longrightarrow B\big(\H \otimes \ell^2(P)\big); X_p \ni x \longmapsto \psi_p(x)\otimes V_p, \quad p \in P,
\]
where $p \mapsto V_p$ is the left regular representation of $P$. By Lemma~\ref{L:Nicatensor} we have that $\psi\otimes V$ is Nica-covariant. We claim that $\psi\otimes V$ satisfies the conditions of Fowler's Theorem~\ref{T:toep-uniq}. Indeed, let $P_e $ be the projection onto scalar multiples of the characteristic function of $e \in P$ in $\ell^2(P)$. Then for any $ p \in P\backslash \{e\}$ we have
\[
(\psi\otimes V)^{(p)}(I)(I\otimes P_e)= (I\otimes P_e)(\psi\otimes V)^{(p)}(I) = 0,
\]
so for all $p\in P \setminus \{e\}$ we have
 $\big(I - (\psi \otimes V)^{(p)}(I)\big)(I\otimes P_e)= I\otimes P_e$. 
Hence, for each finite set $F\subseteq P \setminus \{e\}$, the map 
$$
X_e \longrightarrow B(\cH \otimes \ell^2(P)); \, a \longmapsto (\psi \otimes V)_e(a) \prod_{p\in F}\big(I - (\psi \otimes V)^{(p)}(I)\big)
$$
is injective when restricted to the reducing subspace $(I \otimes P_e)(\cH \otimes \ell^2(P))$ as it reduces to the map
\[
X_e \longrightarrow B(\cH \otimes \ell^2(P)); \, a \longmapsto \psi(a)\otimes P_e.
\]
Therefore, Fowler's Theorem~\ref{T:toep-uniq} implies that the induced $*$-homomorphism $$(\psi\otimes V)_* : \cN \cT_X \longrightarrow B(\cH \otimes \ell^2(P))$$ is injective. In particular, it is completely isometric on $\cN \cT^+_X$.

Recall that by Lemma \ref{L:Nicatensor} we have that $\psi \otimes U$ is a Nica-covariant isometric representation of $X$ and so $(\psi \otimes U)_*$ makes sense as a representation of $\N \T_X$. Now let $Q$ be the projection from $\ell^2(G)$ to $\ell^2(P)$. Then 
\[
(I\otimes Q)(\psi\otimes U)(x_p)(I\otimes Q)=(\psi\otimes V)(x_p), 
\]
for all $x_p \in X_p$, $p \in P$. Since the range of $Q$ is invariant for polynomials in $U_p$ for $p\in P$, the above equation promotes to
\begin{equation}\label{eq;dil}
(I\otimes Q)(\psi\otimes U)_*(x)(I\otimes Q)=(\psi\otimes V)_*(x), 
\end{equation}
for all $x \in \N\T_X^+$. As we saw earlier, $(\psi\otimes V)_*$ is completely isometric on $\cN \cT^+_X$. Therefore (\ref{eq;dil}) implies the same for $(\psi\otimes U)_*$. However
\[
(\psi\otimes U )_*=(\psi_*\otimes\id)\circ \delta_X
\]
and so $(\psi_*\otimes\id)\circ \delta_X$ is completely isometric on $\cN \cT^+_X$, as desired.
\end{proof}

Before stating our main result, we need to make an observation. If $\C$ is a $\ca$-cover for an operator algebra $\A$ and $\J\subseteq \C$ is the Shilov ideal of $\A\subseteq \C$, then $\J$ is left invariant by any automorphism of $\C$ that leaves $\A$ invariant. This is due to the fact that $\J$ is the \textit{largest} boundary ideal in $\C$.

\begin{theorem} \label{T:CNP-envelope}
Let $(G,P)$ be an abelian, lattice ordered group. Let $X$ be a compactly aligned product system over $P$ with coefficients in $\A$. Then the $\ca$-algebras
\begin{itemize}
\item[\textup{(i)}] $\ca_e(\N\T_X^+)$, the $\ca$-envelope of the Nica tensor algebra $\N\T^+_X$,
\item[\textup{(ii)}] $ \cN \cO_X^r$, the co-universal $\ca$-algebra for gauge-compatible, Nica-cov\-ariant representations of X, of Carlsen, Larsen, Sims and Vittadello \cite{CLSV11}, and
\item[\textup{(iii)}] Sehnem's $\ca$-algebra $\A \times_XP$ \cite{Seh+},
\end{itemize}are mutually isomorphic via maps that send generators to generators.
\end{theorem}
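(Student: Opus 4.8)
The plan is to prove the two isomorphisms (iii)$\cong$(i) and (i)$\cong$(ii) in that order, using Lemma \ref{lemma;NOr} as the common engine. The first observation is that Sehnem's representation $j'\colon X\to\A\times_X P$ is isometric and Nica-covariant, is gauge-compatible via its canonical gauge coaction $\delta$, and has $j'_e$ faithful by property (A). Hence Lemma \ref{lemma;NOr} applies and $j'_*$ is completely isometric on $\N\T_X^+$; since $\A\times_X P$ is generated by the image of $j'$, the pair $\big(\A\times_X P,\,j'_*|_{\N\T_X^+}\big)$ is a C*-cover of $\N\T_X^+$.

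To identify this C*-cover with the C*-envelope, I would invoke the universal property of $\ca_e(\N\T_X^+)$ to obtain a surjection $\pi\colon\A\times_X P\to\ca_e(\N\T_X^+)$ that is the identity on $\N\T_X^+$, whose kernel is the Shilov ideal $\J$. The key point is that $\J$ is gauge-invariant: since $G$ is abelian, the coaction $\delta$ corresponds by Proposition \ref{P:gauge} to a genuine gauge action of $\hat G$ by automorphisms, this action scales each $j'_p(X_p)$ and therefore leaves $\N\T_X^+$ invariant, and so by the observation preceding the theorem it leaves the largest boundary ideal $\J$ invariant. Now $\pi$ is completely isometric, hence faithful, on $\N\T_X^+\supseteq\A=X_e$, so it is faithful on $j'_*(\A)$; property (B) of Sehnem's algebra then forces $\pi$ to be faithful on the fixed-point algebra $(\A\times_X P)^{\delta}$, i.e. $\J\cap(\A\times_X P)^{\delta}=0$. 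Combining gauge-invariance of $\J$ with the faithful conditional expectation $E\colon\A\times_X P\to(\A\times_X P)^{\delta}$ obtained by averaging the $\hat G$-action (for $0\neq a\in\J$ one has $0\neq E(a^*a)\in\J\cap(\A\times_X P)^{\delta}$), we conclude $\J=0$, and hence $\A\times_X P\cong\ca_e(\N\T_X^+)$ via a map sending generators to generators.

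For the isomorphism with $\N\O_X^r$, I would verify that $\A\times_X P$, now identified with the C*-envelope, satisfies the three defining properties of the co-universal object. Properties (1) and (2) are immediate: $j'_e$ is faithful by (A), and $j'_*$ is a gauge-compatible surjection by construction. For property (3), let $\psi\colon X\to\B$ be any gauge-compatible, Nica-covariant, isometric representation with $\psi_e$ faithful; Lemma \ref{lemma;NOr} shows $\psi_*$ is completely isometric on $\N\T_X^+$, so $\ca(\{\psi_p(X_p)\}_{p\in P})$ is another C*-cover of $\N\T_X^+$. Minimality of the C*-envelope then yields a surjective $*$-homomorphism $q$ onto $\ca_e(\N\T_X^+)\cong\A\times_X P$ that is the identity on $\N\T_X^+$, and in particular $q\circ\psi_p=j'_p$ on each $X_p$. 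Thus $\A\times_X P$ is co-universal, and by the uniqueness statement \cite[Theorem 4.1]{CLSV11} it is isomorphic to $\N\O_X^r$; this simultaneously establishes the existence of $\N\O_X^r$.

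The step I expect to be the main obstacle is the passage $\J\cap(\A\times_X P)^{\delta}=0\Rightarrow\J=0$, since it hinges on the faithfulness of the conditional expectation onto the fixed-point algebra of the coaction and its compatibility with gauge-invariant ideals. This is where the abelian, lattice-ordered hypothesis is genuinely used, via Proposition \ref{P:gauge} to convert the coaction into an honest $\hat G$-action that can be averaged, and where property (B) of Sehnem's algebra is indispensable: it is precisely the bridge that transfers faithfulness from the coefficient algebra $\A$ to the fixed-point algebra, thereby allowing the boundary/Shilov-ideal machinery to interact with the gauge-invariant uniqueness theorem.
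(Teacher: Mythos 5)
Your proposal is correct, but it reverses the paper's decomposition, so a comparison is in order. The paper first proves (i) $\cong$ (ii) working entirely inside $\N\T_X$: it quotients by the Shilov ideal $\J \subseteq \N\T_X$ of $\N\T_X^+$, uses the invariance of $\J$ under the gauge action together with Proposition \ref{P:gauge} to see that the quotient representation $j$ is gauge-compatible with $j_e$ faithful, and then invokes Lemma \ref{lemma;NOr} plus the universal property of the envelope to verify the co-universal property for $\cenv(\N\T_X^+)$; note that this step is independent of Sehnem's results, so the existence of $\N\O_X^r$ (Corollary \ref{C:CNP-envelope}) is secured without any reference to $\A\times_X P$. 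Only afterwards does the paper bring in Sehnem's algebra: co-universality yields $\pi\colon \A\times_X P \to \N\O_X^r$ with $j_* = \pi\circ j'_*$, and injectivity of $\pi$ follows from property (B), equivariance, and faithfulness of the conditional expectation onto $(\A\times_X P)^{\delta}$. You instead prove (iii) $\cong$ (i) first, by showing that the Shilov ideal of $\N\T_X^+$ inside the $\ca$-cover $\A\times_X P$ vanishes; your argument combining gauge-invariance of the Shilov ideal, property (B), and the averaging $E(a^*a)\in \J\cap(\A\times_X P)^{\delta}$ is exactly the explicit form of the step the paper dispatches with ``it is easy to see that $\pi$ must be injective,'' since your quotient map and the paper's $\pi$ coincide. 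The toolbox is identical (Lemma \ref{lemma;NOr}, Proposition \ref{P:gauge}, the largest-boundary-ideal observation, Sehnem's (A)/(B), the faithful expectation); what your ordering buys is a concrete model of the envelope from the outset and a fully spelled-out injectivity argument, while the paper's ordering buys a Sehnem-free existence proof for $\N\O_X^r$, which is one of its advertised selfadjoint applications. Two small points to make explicit in your write-up: Nica covariance of $j'$ is not automatic and needs the citation the paper uses (\cite[Proposition 4.2]{Seh+}), and the isometry of each $j'_p$ should be noted to follow from faithfulness of $j'_e$, so that Lemma \ref{lemma;NOr} genuinely applies.
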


\begin{proof} First we show that $\ca_e(\N\T_X^+) \simeq \cN \cO_X^r$ canonically. To do this, it will suffice to show that $\ca_e(\N\T_X^+) $ satisfies the properties that determine $\cN \cO_X^r$. 

Let $i_X\colon X \rightarrow \N\T_X$ be the universal Nica-covariant isometric representation of $X$ and let $\J \subseteq \N\T_X$ be the Shilov ideal of $\N\T_X^+ \subseteq \N\T_X$. Now  \cite[Proposition 5.9]{Fow02} shows that if we compose $i_X$ with any representation of $\N\T_X$, we obtain a Nica-covariant isometric representation of $X$. Therefore by taking quotient with $\J$, we obtain a Nica-covariant isometric representation
\[
j : X \longrightarrow \N\T_X\slash \J=\cenv(\N\T_X^+),
\]
which promotes to a representation $j_*: \N\T_X \rightarrow \cenv(\N\T_X^+)$.
By the discussion preceding the theorem, the canonical gauge action of $\hat{G}$ on $\N\T_X$ leaves invariant the Shilov ideal $\J$. By Proposition~\ref{P:gauge}, $\cenv(\N\T_X^+)$ admits a canonical gauge coaction and $j_*$ becomes gauge-compatible. In addition, $j_e$ is faithful because $\A \subseteq \N\T_X^+$. Therefore properties (i) and (ii) of the definition for $\N\O_X^r$ hold for the pair $(\cenv(\N\T^+_X), j)$.

Now let $\psi : X \rightarrow \B$ be a gauge-compatible, Nica-covariant isometric representation with $\psi_e$ faithful. By Lemma~\ref{lemma;NOr}, the restriction of $\psi_*$ on $\N\T_X^+$ is completely isometric and so by the defining properties of the $\ca$-envelope, there exists a surjective $*$-homomorphism $q:\ca(\psi(X))\rightarrow \cenv(\N\T_X^+)$ so that $j_*=q\circ\psi_*$. This establishes property (iii) and so the pair $(\cenv(\N\T^+_X), j)$ satisfies all the defining properties of $\cN \cO_X^r$, as desired.

Having established the existence of $\N\O_X^r$, the canonical isomorphism $\N\O_X^r \simeq \A\times_X P$ follows now from $(A)$ and $(B)$. Indeed, consider Sehnem's universal representation $j' \colon X \rightarrow \A \times_X P$, which is faithful on $\A$ and gauge-compatible by \cite[Theorem 3.10.(C3)]{Seh+} and Nica-covariant by \cite[Proposition 4.2]{Seh+}. By property (iii) of $\N\O_X^r$ (co-universality), there exists a $*$-homomorphism $\pi \colon \A \times_X P \rightarrow \N\O_X^r$ so that $j_* = \pi \circ j'_*$. By \cite[Theorem 3.10.(C3)]{Seh+} the $*$-homorphism $\pi$ is faithful on the fixed point algebra $(\A \times_X P)^{\delta}$ of the canonical coaction $\delta$ of $G$ on $\A\times_X P$. Thus, as the conditional expectation onto $(\A \times_X P)^{\delta}$ is faithful, and as $\pi$ is equivariant, it is easy to see that $\pi$ must be injective on all of $\A \times_X P$.
\end{proof}

One of the significant features of Theorem~\ref{T:CNP-envelope} is that it establishes the existence of $\N\O_X^r$ for any compactly aligned product system over an \textit{abelian}, lattice ordered group, thus resolving the investigation initiated in \cite{CLSV11} for this important case. Stated formally

\begin{corollary} \label{C:CNP-envelope}
Let $(G,P)$ be an abelian, lattice ordered group. If $X$ is any compactly aligned product system over $P$ with coefficients in $\A$, then the co-universal $\ca$-algebra $ \cN \cO_X^r$ for gauge-compatible, Nica-covariant representations of X, of Carlsen, Larsen, Sims and Vittadello \cite{CLSV11} exists.
\end{corollary}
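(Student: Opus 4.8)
The plan is to read off this existence statement as an immediate consequence of Theorem~\ref{T:CNP-envelope}. The crucial point is that, in contrast with $\cN\cO_X^r$ whose existence is precisely what is at issue, the $\ca$-envelope $\ca_e(\N\T_X^+)$ of the Nica tensor algebra is \emph{always} available: it exists by the general non-commutative boundary theory recalled in the preliminaries, namely via Hamana's existence theorem applied to the unique unitization $(\N\T_X^+)^1$ followed by passage to the $\ca$-subalgebra generated by $\N\T_X^+$. Thus the strategy is to produce $\cN\cO_X^r$ by exhibiting a concrete object that is guaranteed to exist and that satisfies the co-universal property.

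First I would invoke the opening portion of the proof of Theorem~\ref{T:CNP-envelope}, where it is verified that the pair $(\ca_e(\N\T_X^+), j)$ satisfies all three defining properties of $\cN\cO_X^r$. Here $j\colon X \to \N\T_X/\J = \ca_e(\N\T_X^+)$ is the Nica-covariant isometric representation obtained by composing the universal representation $i_X$ with the quotient map by the Shilov ideal $\J$ of $\N\T_X^+ \subseteq \N\T_X$. Property (i) holds because $\A \subseteq \N\T_X^+$ forces $j_e$ to be injective; property (ii) holds because the canonical gauge action of $\hat G$ leaves the Shilov ideal $\J$ invariant (being the largest boundary ideal), so it descends to a gauge coaction on the quotient and renders $j_*$ gauge-compatible via Proposition~\ref{P:gauge}; and property (iii), co-universality, is exactly the conclusion delivered by Lemma~\ref{lemma;NOr} together with the universal property of the $\ca$-envelope.

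Finally, since $\cN\cO_X^r$ is unique whenever it exists by \cite[Theorem 4.1]{CLSV11}, producing an object that satisfies its defining properties suffices to conclude that $\cN\cO_X^r$ exists and is canonically isomorphic to $\ca_e(\N\T_X^+)$. There is no genuine obstacle beyond Theorem~\ref{T:CNP-envelope} itself; the entire substance of the corollary is the recognition that the always-available $\ca$-envelope serves as the co-universal object, which is exactly the mechanism by which the C*-envelope approach resolves the abelian, lattice-ordered case that the direct constructions of \cite{CLSV11} had left open.
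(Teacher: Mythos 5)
Your proposal is correct and follows essentially the same route as the paper: the corollary is read off from Theorem~\ref{T:CNP-envelope}, whose proof verifies that the always-existing pair $(\ca_e(\N\T_X^+), j)$ (with $j$ induced by the quotient by the Shilov ideal, gauge-compatible via Proposition~\ref{P:gauge}, and co-universal via Lemma~\ref{lemma;NOr}) satisfies the three defining properties of $\cN\cO_X^r$, with uniqueness supplied by \cite[Theorem 4.1]{CLSV11}. Nothing is missing.
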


It is also important to note that Corollary~\ref{C:CNP-envelope} is a purely selfadjoint result which is obtained through the use of non-selfadjoint operator algebra theory.

The actual definition of Sehnem's algebra $\A\times_X P$ \cite{Seh+} is quite involved. Nevertheless, in the case where every bounded subset of the semigroup $P$ has a maximal element, e.g., when $(G,P) =(\bbZ^d, \bbN^d)$, the definition of $\A\times_XP$ simplifies considerably and is worth repeating here.

Suppose $(G,P)$ is abelian and lattice ordered and $X$ is a compactly aligned product system over $P$ with coefficients in $\cA$. We denote $I_e(X)= \cA$, and for each $q\in P \setminus \{e\}$ write $I_q(X):= \cap_{e<p\leq q} \Ker (\phi_p)$. We then denote
$$
\tilde{X}_q:= \oplus_{p\leq q} X_p \cdot I_{p^{-1}q}(X).
$$
The homomorphism implementing the left action on the $\cA$-correspondence $\tilde{X_q}$ is denoted by $\tilde{\phi}_q$, and we say that $X$ is $\tilde{\phi}$-injective if $\tilde{\phi}_q$ is injective for all $q\in P$. By \cite[Lemma 3.15]{SY11}, if every bounded subset of $P$ has a maximal element, every product system over $P$ is automatically $\tilde{\phi}$-injective. 

Recalling the definitions of the maps $\iota_p^q$, we obtain homomorphisms $\tilde{\iota}_p^q : \cL(X_p) \rightarrow \cL(\tilde{X}_q)$ for all $p,q\in P$ with $p\neq e$ defined by 
$$
\tilde{\iota}_p^q(T) =\Big( \oplus_{p \leq r \leq q}\iota_p^r(T)\Big) \oplus \left(\oplus_{p \nleq r \leq q} 0_{X_r\cdot I_{r^{-1}q}(X)}\right).
$$ 
When $p=e$, similarly to the above we obtain a homomorphism $\tilde{\iota}_e^q : \cK(X_e) \rightarrow \cL(\tilde{X}_q)$.

We say that a Nica-covariant isometric representation $\psi$ of $X$ is \emph{Cuntz-Nica-Pimsner} covariant (a CNP representation) if given any finite set $F\subseteq P$ and compact operators $T_p \in \K(X_p)$ with $p\in F$, satisfying 
\begin{align} \label{den; eqCNP}
\sum_{p\in F} \tilde{\iota}_p^q(T_p) = 0
\end{align}
for ``sufficiently large" $q\in P$, we have $\sum_{p\in F}\psi^{(p)}(T_p) = 0$. Here ``sufficiently large" $q\in P$ means that given any $s\in P$, there exists $r\geq s$ so that \eqref{den; eqCNP} is satisfied for all $q\geq r$.

We denote by $\N \O_X$ the universal C*-algebra generated by a (universal) CNP covariant representation $j_X : X \rightarrow B(\H)$. One of the main results \cite[Theorem 4.1]{SY11} in the paper of Sims and Yeend is that in the case where $X$ is $\tilde{\phi}$-injective, the representation $j_{X,p}:=(j_X)_p$ is isometric on each $X_p$. Then, \cite[Proposition 4.6]{Seh+} shows now that $\A\times_X P$ and $\N\O_X$ are canonically isomorphic. We will use this fact in Section~\ref{S:SZ}.

For the moment recall that a product system $X$ over an abelian, lattice ordered semigroup $(G,P)$ is said to be \emph{injective} if the left action $\phi_p$ of each $X_p$ is injective, and \emph{regular} if additionally the image of $\phi_p$ is in $\cK(X_p)$ for all $p \in P$. By \cite[Corollary 5.2]{SY11} we know that for regular product systems $X$ with $(G,P)$ quasi-lattice ordered, an isometric representation $\psi$ of $X$ is CNP if and only if $\psi^{(p)} \circ \phi_p = \psi_e$ for all $p \in P$. Hence, we also obtain

\begin{corollary}
Let $X$ be a \emph{regular} product system over an abelian, lattice ordered semigroup $(G,P)$. Then $\ca_e(\N\T^+_X)$ is the universal C*-algebra generated by the image of representations $\psi_*$ where $\psi$ is an isometric representation of $X$ such that $\psi^{(p)} \circ \phi_p = \psi_e$ for all $p \in P$.
\end{corollary}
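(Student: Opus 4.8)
The plan is to recognize $\ca_e(\N\T^+_X)$ as the Sims--Yeend algebra $\N\O_X$ and then to read off the asserted universal property from the concrete description of Cuntz--Nica--Pimsner covariance available in the regular case. First I would record that a regular product system is in particular injective, so that $\Ker(\phi_p)=\{0\}$ for every $p\in P$; consequently $I_q(X)=\cap_{e<p\le q}\Ker(\phi_p)=\{0\}$ for every $q\neq e$, and therefore $\tilde X_q=X_q$ with $\tilde\phi_q=\phi_q$. In particular $X$ is $\tilde\phi$-injective, so the canonical isomorphism $\A\times_X P\simeq\N\O_X$ of \cite[Proposition 4.6]{Seh+} is available, where $\N\O_X$ denotes the universal $\ca$-algebra of the universal CNP covariant representation $j_X$.

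Next I would splice this with Theorem~\ref{T:CNP-envelope}, which supplies a canonical isomorphism $\ca_e(\N\T^+_X)\simeq\A\times_X P$ carrying generators to generators. Composing the two identifications gives a canonical isomorphism
\[
\ca_e(\N\T^+_X)\;\simeq\;\N\O_X
\]
that sends the canonical copy of $X$ inside $\ca_e(\N\T^+_X)$ onto $j_X(X)$. It then suffices to check that $\N\O_X$ has exactly the universal property in the statement.

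This is where the main input enters: by \cite[Corollary 5.2]{SY11}, for a regular $X$ an isometric representation $\psi$ is CNP covariant if and only if $\psi^{(p)}\circ\phi_p=\psi_e$ for all $p\in P$. Hence the class of isometric representations satisfying $\psi^{(p)}\circ\phi_p=\psi_e$ coincides with the class of CNP covariant representations (in particular this relation already forces Nica covariance). Since $\N\O_X$ is by definition universal for CNP covariant representations, it is universal for isometric representations satisfying the displayed relation, and transporting this universal property along the canonical isomorphism above yields the same statement for $\ca_e(\N\T^+_X)$.

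I expect the only delicate point to be the bookkeeping of generators: one must confirm that each isomorphism in the chain --- the one from Theorem~\ref{T:CNP-envelope} and the one from \cite[Proposition 4.6]{Seh+} --- genuinely intertwines the respective universal generating representations of $X$, so that the universal property is transferred rather than merely an abstract $\ca$-isomorphism being produced. By contrast, the implication ``regular $\Rightarrow$ $\tilde\phi$-injective'' and the characterization of CNP covariance by the relation $\psi^{(p)}\circ\phi_p=\psi_e$ are routine, the latter being handed to us directly by \cite[Corollary 5.2]{SY11}.
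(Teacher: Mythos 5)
Your proposal is correct and takes essentially the same route as the paper: the paper obtains this corollary precisely by combining Theorem~\ref{T:CNP-envelope} with the canonical identification $\A\times_X P\simeq\N\O_X$ for $\tilde{\phi}$-injective systems from \cite[Proposition 4.6]{Seh+} and the characterization in \cite[Corollary 5.2]{SY11} of CNP representations of regular systems as exactly the isometric representations satisfying $\psi^{(p)}\circ\phi_p=\psi_e$ (which, as you note and as the paper records via \cite[Proposition 5.4]{Fow02}, automatically forces Nica covariance in the regular case). Your explicit verifications that regularity gives $I_q(X)=\{0\}$, hence $\tilde{X}_q=X_q$ and $\tilde{\phi}$-injectivity, and that the isomorphisms intertwine the universal generating copies of $X$, are details the paper leaves implicit.
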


\begin{remark}
In a private communication, and by using completely different methods, Evgenios Kakariadis characterized the C*-envelope of injective product systems $X$ over $(\bbZ^d, \bbN^d)$, when $X$ satisfies the additional condition of strong compact alignment (see \cite[Definition 2.2]{DK+}).
\end{remark}

\section{Dilations of Nica-Toeplitz representations} \label{S:SZ}

In this section we answer a question of Skalski and Zacharias, which appears at the end of \cite{SZ08}. For that purpose, we explain some further necessary background on non-commutative boundary theory.

Suppose $\cA$ is an operator algebra generating a C*-algebra $\cB$. Recall that we say that $\rho : \cA \to B(\cH)$ is a \emph{representation} if $\rho$ is a completely contractive homomorphism. When $\A$ is unital, we say that a unital representation $\rho:\cA \to B(\cH)$ has the {\em unique extension property} if the only unital completely positive extension to $\cB$ is a $*$-representation.

For a unital operator algebra $\cA$ and a unital representation $\varphi: \cA \rightarrow B(\cH)$, a unital representation $\psi : \cA \rightarrow B(\cK)$ is said to {\em dilate} $\varphi$ if there is an isometry $V: \cH \rightarrow \cK$ such that for all $a\in \cA$ we have $\varphi(a) = V^*\psi(a)V$. Since $V$ is an isometry, we can identify $\cH \cong V(\cH)$ as a subspace of $\cK$, so that $\psi$ dilates $\varphi$ if and only if there is a larger Hilbert space $\cK$ containing $\cH$ such that for all $a \in \cA$ we have that $\varphi(a) = P_{\cH} \psi(a)|_{\cH}$ where $P_{\cH}$ is the projection onto $\cH$. We say that a unital representation $\rho : \cA \rightarrow B(\cK)$ is {\em maximal} if whenever $\pi$ is a unital representation dilating $\rho$, then in fact $\pi = \rho \oplus \psi$ for some unital representation $\psi$.

Based on ideas of Agler \cite{Agl88}, Muhly and Solel from \cite{MS98a}, Dritschel and McCullough \cite[Theorem 1.1]{DM05} (see also \cite{Arv+}) showed that a unital representation $\rho : \cA \rightarrow B(\cK)$ is maximal if and only if it has the unique extension property. Dritschel and McCullough \cite[Theorem 1.2]{DM05} then used this to show that every unital representation $\rho$ on $\cA$ can be dilated to a \emph{maximal} unital representation $\pi$ on $\cA$.

Suppose now that $X$ is a compactly aligned product system with coefficients in $\cA$ over a quasi-lattice ordered semigroup $(G,P)$. We will say that a Nica-covariant isometric representation $\psi : X \rightarrow B(\cH)$ of $X$ is \emph{fully-coisometric} if $\psi_p(X_p)\cH = \psi_e(\cA)\cH$ for every $p\in P$.

When $(G,P) = (\bZ^d, \bN^d)$, we have a standard set of generators $\{e_i\}_{i=1}^d$ for $\bN^d$, so that an isometric representation $\psi$ can be parameterised as a $(d+1)$-tuple $(\sigma^{\psi}, T_1^{\psi},...,T_d^{\psi})$ where $\sigma^{\psi}:= \psi_e$ is a representation of the coefficient algebra $\cA$, and $T_i^{\psi}:=\psi_{e_i}$ is an isometric representation of $X_{e_i}$ in the sense of \cite[Definition 1.1]{SZ08}. In this case, the $d+1$-tuple $(\sigma^{\psi}, T_1^{\psi},...,T_d^{\psi})$ becomes an isometric representation of $X$ in the sense of \cite[Definition 1.2]{SZ08}. Nica covariance of $\psi$ is equivalent to the assumption that $(T_1^{\psi},...,T_d^{\psi})$ are doubly commuting in the sense of \cite[Definition 2.1]{SZ08}, and when $\psi$ is non-degenerate, it is fully-coisometric if and only if $(\sigma^{\psi}, T_1^{\psi},...,T_d^{\psi})$ is fully-coisometric as in \cite[Definition 1.1]{SZ08}. In fact, the above forms a bijection between Nica-covariant isometric representations $\psi$ of $X$, and doubly-commuting isometric representations $(\sigma,T_1,...,T_d)$ as in definition \cite[Definition 1.1]{SZ08}. We also have that when an isometric representation $\psi$ is fully-coisometric, it is Cuntz-Pimsner covariant in the sense that $\psi^{(p)} \circ \phi_p = \psi_e$(see \cite[Section 1]{Fow02}), so that by \cite[Proposition 5.4]{Fow02} it is automatically Nica-covariant.

In the classical context of a unitary dilation of isometries, a theorem of It\^o shows that every commuting $d$-tuple of isometries dilate to a commuting $d$-tuple of unitaries. In the hope of generalizing It\^o's theorem, Skalski and Zacharias (see \cite[Section 5]{SZ08}) ask when an isometric representation $(\sigma,T_1,...,T_d)$ of a product system $X$ over $\bN^d$ has an isometric and fully-coisometric dilation $(\pi,U_1,...,U_d)$. Laca's theorem \cite{Lac00} generalizes It\^o's theorem to show that every isometric representation $V: P \rightarrow B(\cH)$ of an Ore semigroup $(G,P)$ dilates to a unitary representation. This includes all abelian, lattice ordered groups $(G,P)$.

When $X$ is regular, due to \cite[Corollary 5.2]{SY11}, and as the projection $\psi^{(p)}(I)$ onto $\psi_p(X_p)\cH$ is the SOT limit of (any) contractive approximate identity for $\psi^{(p)}(\cK(X_p))$, we have that $\psi$ is CNP if and only if each $\psi_p$ is fully-coisometric. The following then yields an answer to the question of Skalski and Zacharias when $X$ is regular and $\psi$ is Nica-covariant, and is a generalization of \cite[Theorem 5.4]{SZ08} to the abelian, lattice ordered semigroup context.

\begin{theorem} \label{T:unitary-dil}
Let $X$ be a regular product system over an abelian, lattice ordered semigroup $(G,P)$ and let $\psi$ be a Nica-covariant isometric representation of $X$ on a Hilbert space $\cH$. Then there is an isometric and fully-coisometric representation $\tilde{\psi}$ of $X$ on $\cK$ that dilates $\psi$ in the sense that $\cH \subseteq \cK$ and $\psi_p(x_p)=P_{\cH}\tilde{\psi}_p(x_p)|_{\cH}$ for every $x_p \in X_p$.
\end{theorem}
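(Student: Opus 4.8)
The plan is to reduce the statement to a dilation problem for the operator algebra $\N\T^+_X$ and then extract the answer from Theorem~\ref{T:CNP-envelope}. First I would observe that, since $\psi$ is Nica-covariant, the universal property of $\N\T_X$ yields a $\ast$-homomorphism $\psi_*\colon\N\T_X\to B(\cH)$ whose restriction $\rho:=\psi_*|_{\N\T^+_X}$ is a completely contractive representation of the operator algebra $\N\T^+_X$. Passing to the unique unitization of $\N\T^+_X$ (via Meyer's theorem) and applying the dilation theorem of Dritschel and McCullough \cite[Theorem 1.2]{DM05}, I would dilate $\rho$ to a \emph{maximal} representation $\pi$ on a Hilbert space $\cK\supseteq\cH$, so that
$$
\rho(a)=P_{\cH}\,\pi(a)\big|_{\cH}\qquad\text{for all }a\in\N\T^+_X .
$$

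The point of maximality is that it forces $\pi$ to come from the $\ca$-envelope. Indeed, by \cite[Theorem 1.1]{DM05} a maximal representation has the unique extension property, so $\pi$ extends to a $\ast$-representation $\tilde\pi$ of $\ca_e(\N\T^+_X)$. Here Theorem~\ref{T:CNP-envelope} does the essential work: it supplies canonical, generator-preserving isomorphisms $\ca_e(\N\T^+_X)\cong\N\O^r_X\cong\A\times_X P$. Moreover, a regular product system is $\tilde\phi$-injective, since $\Ker\phi_p=0$ for $p\neq e$ forces $I_q(X)=0$ for $q\neq e$ and hence $\tilde X_q=X_q$ with $\tilde\phi_q=\phi_q$; thus \cite[Proposition 4.6]{Seh+} identifies $\A\times_X P$ with the Cuntz--Nica--Pimsner algebra $\N\O_X$ and its universal CNP representation $j_X$. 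I would then set $\tilde\psi:=\tilde\pi\circ j_X$, which is a Nica-covariant isometric representation of $X$ on $\cK$, because the relations of Definition~\ref{D:isom} and CNP covariance are preserved under the $\ast$-homomorphism $\tilde\pi$.

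It remains to verify the three required properties of $\tilde\psi$. That $\tilde\psi$ is an isometric representation is built into the previous step. That it is fully-coisometric is exactly where regularity is used: by \cite[Corollary 5.2]{SY11} a CNP representation of a regular system satisfies $\tilde\psi^{(p)}\circ\phi_p=\tilde\psi_e$ for all $p$, which says that the range projection $\tilde\psi^{(p)}(I)$ onto $\tilde\psi_p(X_p)\cK$ coincides with the projection onto $\tilde\psi_e(\A)\cK$, i.e.\ $\tilde\psi_p(X_p)\cK=\tilde\psi_e(\A)\cK$. Finally, under the generator-preserving identifications the element $i_{X,p}(x_p)$ of $\N\T^+_X$ is carried to $j_{X,p}(x_p)$, so $\pi(i_{X,p}(x_p))=\tilde\psi_p(x_p)$; evaluating the compression formula at $a=i_{X,p}(x_p)$ then gives
$$
\psi_p(x_p)=\rho(i_{X,p}(x_p))=P_{\cH}\,\tilde\psi_p(x_p)\big|_{\cH},
$$
which is precisely the asserted dilation relation (the case $p=e$ covering the coefficient algebra).

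The conceptual content has all been front-loaded into Theorem~\ref{T:CNP-envelope}, so the anticipated difficulty is a matter of bookkeeping rather than of ideas. The two points demanding care are: applying the Dritschel--McCullough machinery, which is stated for unital algebras and unital representations, to the unitization $(\N\T^+_X)^1$ and checking that the resulting dilation descends correctly to $\N\T^+_X$ and to $\ca_e(\N\T^+_X)$; and tracking the generator-preserving identifications so that the compression formula is genuinely asserted for the non-unital generators $i_{X,p}(x_p)$ rather than merely on the unitization. The one substantive input beyond this is the equivalence, for regular systems, between maximality of the dilation at the level of $\N\T^+_X$ and full-coisometry at the level of $X$; this equivalence is delivered precisely by the identification $\ca_e(\N\T^+_X)\cong\N\O_X$ together with \cite[Corollary 5.2]{SY11}.
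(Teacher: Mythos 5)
Your proposal is correct and follows essentially the same route as the paper's own proof: restrict $\psi_*$ to $\N\T^+_X$, unitize via Meyer's theorem, dilate to a maximal representation by Dritschel--McCullough, use the unique extension property together with Theorem~\ref{T:CNP-envelope} to extend to $\ca_e(\N\T^+_X)\simeq \N\O^r_X \simeq \A\times_X P$, and conclude via the CNP/fully-coisometric equivalence for regular systems from \cite[Corollary 5.2]{SY11}. The only (welcome) additions are your explicit check that regularity forces $\tilde\phi$-injectivity, hence $\A\times_X P\cong\N\O_X$ via \cite[Proposition 4.6]{Seh+}, which the paper invokes more implicitly, and you should just note, as the paper does, that the unital case is handled by the same argument without unitization.
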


\begin{proof}
Assume that $\N\T_X^+$ is not unital. Since $\N\T_X^+$ has a contractive approximate unit, by \cite[Remark 2.1.8]{BLM04} no $\ca$-cover of $\N\T_X^+$ is unital.

Let $\psi$ be a Nica-covariant isometric representation of $X$. This induces a representation $\psi_*$ of $\N\T_X$. Thus, a representation $\psi_* : \cN \cT_X^+ \rightarrow B(\cH)$ (which we denote the same way) is obtained by restriction. By Meyer's Theorem \cite[Section 3]{Mey01}, we have a unital representation 
\[
\psi_*^1 : (\cN \cT_X^+)^1 \longrightarrow B(\cH).
\]
By the Dritschel-McCullough theorem \cite[Theorem 1.2]{DM05}, we have that $\psi_*^1$ dilates to a \emph{maximal} unital representation 
\[
\tilde{\psi_*^1} : (\cN \cT_X^+)^1 \longrightarrow B(\cK),
\]
which by \cite[Theorem 1.1]{DM05} has the unique extension property. By Theorem \ref{T:CNP-envelope} and Meyer's Theorem, $(\N\O^r_X)^1$ becomes a $\ca$-cover of $(\cN \cT_X^+)^1$, via a map that sends generators to generators canonically. We may therefore extend $\tilde{\psi_*^1}$ to a $*$-representation of $(\N\O^r_X)^1$, which we denote with the same symbol.
 We now define 
 \[
 \tilde{\psi} : X \longrightarrow B(\cK); \ \xi_p \mapsto \tilde{\psi_*^1}(i_{X, p}(\xi_p)), \ p \in P, \ \xi_p \in X_p,
 \]
 to obtain an isometric Nica-covariant representation $\tilde{\psi} : X \rightarrow B(\cK)$ that dilates $\psi$. Since 
 \[
 j_X\colon X \longrightarrow  \N\O^r_X\simeq \A \times_X P
 \]
 is by definition a CNP representation, we have that $ \tilde{\psi}  = \tilde{\psi_*^1} \circ j_X$ is also a CNP representation. By the remarks preceding the theorem, $\tilde{\psi}$ is an isometric and fully-coisometric representation that dilates $\psi$. This completes the proof in the non-unital case.
 
 If $\N\T_X^+$ is unital, then a repetition of the above arguments, without resorting to unitizations, yields a proof.
\end{proof}

The following is the positive answer to the original question of Skalski and Zacharias, under the assumption of Nica-covariance. In Example \ref{E:Nica-needed} we will see that this is optimal.

\begin{corollary} \label{C:unit-dil-SZ}
Let $X$ be a \emph{regular} product system over $\bN^d$, and suppose that $(\sigma,T_1,...,T_d)$ is a \emph{doubly-commuting} isometric representation of $X$ on a Hilbert space $\cH$. Then there is an isometric and \emph{fully-coisometric} representation $(\pi,U_1,...,U_d)$ of $X$ that dilates $\psi$.
\end{corollary}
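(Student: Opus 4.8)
The plan is to recognize that Corollary~\ref{C:unit-dil-SZ} is essentially a direct translation of Theorem~\ref{T:unitary-dil} into the concrete language of the special case $(G,P)=(\bbZ^d,\bbN^d)$, using the dictionary that was carefully set up in the discussion preceding Theorem~\ref{T:unitary-dil}. So the entire proof should be a short matter of matching up definitions. First I would invoke the bijection, described in the paragraph on $(\bbZ^d,\bbN^d)$, between Nica-covariant isometric representations $\psi$ of $X$ and doubly-commuting isometric representations $(\sigma,T_1,\dots,T_d)$ of $X$ in the sense of \cite[Definition 1.2]{SZ08}. Under this correspondence, the given data $(\sigma,T_1,\dots,T_d)$ determines a unique Nica-covariant isometric representation $\psi$ of $X$ on $\cH$ with $\sigma=\psi_e$ and $T_i=\psi_{e_i}$.

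Next I would apply Theorem~\ref{T:unitary-dil} to this $\psi$. Since $X$ is regular and $(G,P)=(\bbZ^d,\bbN^d)$ is certainly an abelian, lattice ordered semigroup, the hypotheses of that theorem are met, and it produces an isometric, fully-coisometric representation $\tilde\psi$ of $X$ on some $\cK \supseteq \cH$ dilating $\psi$, meaning $\psi_p(x_p)=P_{\cH}\tilde\psi_p(x_p)|_{\cH}$ for all $x_p\in X_p$, $p\in P$. Then I would translate $\tilde\psi$ back through the same dictionary: setting $\pi:=\tilde\psi_e$ and $U_i:=\tilde\psi_{e_i}$ gives a $(d+1)$-tuple $(\pi,U_1,\dots,U_d)$. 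Because $\tilde\psi$ is Nica-covariant, the tuple $(U_1,\dots,U_d)$ is doubly commuting; because $\tilde\psi$ is fully-coisometric, the tuple is fully-coisometric in the sense of \cite[Definition 1.1]{SZ08}; and since each $\tilde\psi_{e_i}$ is isometric (as $\tilde\psi_e=\pi$ is nondegenerate/faithful), each $U_i$ is an isometric representation of $X_{e_i}$. Finally the dilation identity $\psi_p(x_p)=P_{\cH}\tilde\psi_p(x_p)|_{\cH}$ specializes at $p=e$ and $p=e_i$ to $\sigma(a)=P_{\cH}\pi(a)|_{\cH}$ and $T_i(x)=P_{\cH}U_i(x)|_{\cH}$, which is exactly the assertion that $(\pi,U_1,\dots,U_d)$ dilates $(\sigma,T_1,\dots,T_d)$.

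There is really no hard analytic obstacle here, since all the substance lives in Theorem~\ref{T:unitary-dil}; the only thing requiring care is bookkeeping with the translation. In particular one must check that the passage from a representation on generators $\{e_i\}$ to a representation of the full product system is unambiguous and that the compression identity for the full $\tilde\psi$ does indeed restrict correctly to the generating indices. Both points are handled by the explicit bijection already recorded in the text before Theorem~\ref{T:unitary-dil}, so I would simply cite that discussion. One minor caveat I would flag is that the statement of Corollary~\ref{C:unit-dil-SZ} writes ``that dilates $\psi$'' where it presumably means ``that dilates $(\sigma,T_1,\dots,T_d)$''; I would phrase the conclusion in terms of the compression identities on the generators to make the dilation statement unambiguous regardless of notation.
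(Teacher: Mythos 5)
Your proposal is correct and matches the paper's intent exactly: the paper gives no separate proof of Corollary \ref{C:unit-dil-SZ}, presenting it as an immediate specialization of Theorem \ref{T:unitary-dil} via the bijection between Nica-covariant isometric representations and doubly-commuting tuples recorded in the discussion preceding that theorem, which is precisely the translation you carry out. Your remark that ``dilates $\psi$'' in the statement should read ``dilates $(\sigma,T_1,\dots,T_d)$'' is also a fair observation about the paper's notation.
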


\section{Examples and comparisons} \label{Sec:examples}

In this section we discuss our results in the case of operator algebras arising from higher rank graphs and show by example that Theorem \ref{T:unitary-dil} is optimal. Our main result from Section \ref{Sec:envelope} has consequences on operator algebras arising from topological higher rank graphs in the sense of Yeend \cite{Yee06,Yee07} or those arising from dynamical systems as in \cite{DFK14}, and this is discussed at the end of the section.

We first collect some terminology on higher rank graphs and their product systems. More details can be found in \cite{RS03}, \cite{RSY03} and \cite{RSY04}. Let $G= (V,E,r,s)$ be a directed graph, and partition $E=E_1 \cup ... \cup E_d$ such that each edge carries a unique color from a selection of $d$ colors. Denote by $E^{\bullet}$ the set of all paths in $G$. We may define a multi-degree function $d: E^{\bullet} \rightarrow \bN^d$ by $d(\lambda) = (n_1,...,n_d)$, where $n_i$ is the number of edges in $\lambda$ from $E_i$.

A higher rank structure on $G$ is an equivalence relation $\sim$ on $E^{\bullet}$ such that for all $\lambda \in E^{\bullet}$ and $p,q \in \bN^d$ with $d(\lambda) = p + q$, there exist \emph{unique} $\mu,\nu \in E^{\bullet}$ with $s(\lambda) = s(\nu)$, $r(\lambda) = r(\mu)$, such that $d(\mu)=p$ and $d(\nu) = q$ and $\lambda \sim \mu \nu$. We denote $\Lambda = E^{\bullet} / \sim$ and keep denoting $d: \Lambda \rightarrow \bN^d$ the multi-degree map. We call the pair $(\Lambda,d)$ a higher rank graph, so that this way, it becomes a higher rank graph as in \cite[Definition 2.1]{RSY03}. We will keep denoting elements in $\Lambda$ by $\lambda$ with the understanding that they may be represented in various ways. For each $p\in \bN^d$ we denote $\Lambda^p := \{ \lambda \in \Lambda \mid d(\lambda) = p \}$, and when $\lambda \in \Lambda$ and $F\subseteq \Lambda$, we denote $\lambda F := \{  \lambda \mu \mid \mu\in F \mbox{ with }\ s(\lambda) = r(\mu)  \} $ and\break $F \lambda := \{ \mu \lambda \ | \mu\in F \mbox{ with }\ s(\mu)=r(\lambda) \}$. We also write $\Lambda^1 := \Lambda^0 \cup \bigcup_{i=1}^d\Lambda^{e_i}$.

For $\mu, \nu \in \Lambda$ let
\[
\Lambda^{\textup{min}}(\mu, \nu) := \{(\alpha, \beta) : \mu \alpha = \nu \beta, d(\mu \alpha) = d(\mu) \vee d(\nu) \}
\]
be the set of minimal common extenders of $\mu$ and $\nu$. We will say that $(\Lambda,d)$ is finitely aligned if $|\Lambda^{\textup{min}}(\mu, \nu)|<\infty$ for every $\mu,\nu \in \Lambda$. Given a vertex $v\in \Lambda^0$, we say that a subset $F\subseteq v \Lambda$ is \emph{exhaustive} if for every $\mu \in v\Lambda$ there is $\nu \in F$ such that $\Lambda^{\textup{min}}(\mu, \nu) \neq \emptyset$. Given a finitely aligned higher rank graph $(\Lambda,d)$, a set of partial isometries $S = \{S_{\lambda}\}_{\lambda \in \Lambda}$ is called a \emph{Toeplitz-Cuntz-Krieger} $\Lambda$-family if
\begin{enumerate}
\item[(P)]
$\{S_v\}_{v\in \Lambda^0}$ is a collection of pairwise orthogonal projections;
\item[(C)]
$S_{\mu} S_{\nu} = S_{\mu \nu}$ when $s(\mu) = r(\nu)$;
\item[(NC)]
$S_{\mu}^*S_{\nu} = \sum_{(\alpha,\beta)\in \Lambda^{\textup{min}}(\mu, \nu)}S_{\alpha}S_{\beta}^*$.
\end{enumerate}
It is called a \emph{Cuntz-Krieger} $\Lambda$-family if it additionally satisfies
\begin{enumerate}
\item[(CK)]
$\prod_{\lambda \in F}(S_v - S_{\lambda}S_{\lambda}^*) = 0$ for every $v\in \Lambda^0$ and all non-empty finite exhaustive sets $F\subseteq v\Lambda$.
\end{enumerate}
We will denote the universal $\ca$-algebra generated by a Cuntz-Krieger $\Lambda$-family by $\ca(\Lambda)$.

Every higher rank graph $(\Lambda,d)$ has a natural product system $X(\Lambda)$ associated to it as in \cite{RS03}. More precisely, for each $p \in \bN^d$ we put a pre-Hilbert $c_0(\Lambda^0)$-bimodule structure on $C_c(\Lambda^p)$ via the formulas
$$
\lip \xi, \eta \rip (v) := \sum_{s(\lambda) = v}\overline{\xi(\lambda)}\eta(\lambda) \ , \ \text{and} \ (a\cdot \xi \cdot b)(\lambda) := a(r(\lambda))\xi(\lambda)b(s(\lambda))
$$
for $\xi,\eta \in X_p$, $a,b \in c_0(\Lambda^0)$, $\lambda \in \Lambda^p$ and $v\in \Lambda^0$.
The separated completion $X(\Lambda)_{p}$ of this pre-Hilbert bimodule then becomes a product system where the identification $X(\Lambda)_{p} \otimes X(\Lambda)_{q} \cong X(\Lambda)_{p+q}$ is given by the map $\chi_{\mu} \otimes \chi_{\nu} \mapsto \delta_{s(\mu),r(\nu)} \cdot \chi_{\mu \nu}$ when $s(\mu) = r(\nu)$ for $\mu \in X_p$ and $\nu \in X_q$. It was shown in \cite[Theorem 5.4]{RS03} that $X(\Lambda)$ is compactly aligned if and only if $\Lambda$ is finitely aligned. Furthermore, $X(\Lambda)$ is regular if and only if $\Lambda$ is row-finite and sourceless (in each color separately).

Given an isometric representation $\psi$ of $X(\Lambda)$, the family $\{ \ t_{\lambda} \ | \ \lambda \in \Lambda \ \}$ given by $t_{\lambda} = \psi_p(\delta_{\lambda})$ becomes a family of partial isometries that satisfies conditions (P), (C) and $t_{\lambda}^*t_{\lambda} = t_{s(\lambda)}$ for any $\lambda \in \Lambda$. We call such a family of operators a $\Lambda$-family. The representation $\psi$ is then Nica-covariant if and only if $\{t_{\lambda}\}_{\lambda \in \Lambda}$ satisfies condition (NC). In fact, Nica-covariant representations $\psi$ of $X(\Lambda)$ are in bijection with Toeplitz-Cuntz-Krieger $\Lambda$ families. We will denote by $\cT(\Lambda):= \cN \cT_{X(\Lambda)}$ the universal C*-algebra generated by a Toeplitz-Cuntz-Krieger family. By \cite[Theorem 5.4]{SY11} we see that the Cuntz-Nica-Pimsner algebra $\cN \cO_{X(\Lambda)}$ coincides with the universal C*-algebra $\ca(\Lambda)$ generated by a Cuntz-Krieger $\Lambda$-family, and that CNP representations of $X(\Lambda)$ are in bijection with Cuntz-Krieger $\Lambda$-families. Denote by $\cT_+(\Lambda)$ the norm closed algebra generated by a universal Toeplitz-Cuntz-Krieger $\Lambda$-family, which coincides with $\cN \cT^+_{X(\Lambda)}$. As a corollary of Theorem \ref{T:CNP-envelope} we obtain the following generalization of \cite[Theorem 3.6]{KK06a} to finitely-aligned higher rank graphs.

\begin{corollary} \label{C:fin-aligned}
Let $\Lambda$ be a finitely aligned higher rank graph. The C*-envelope of $\cT_+(\Lambda)$ coincides with the universal Cuntz-Krieger algebra $\ca(\Lambda)$ associated to $\Lambda$.
\end{corollary}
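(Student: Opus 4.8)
The plan is to obtain the result as an immediate application of Theorem~\ref{T:CNP-envelope}, specialized to the abelian, lattice ordered group $(G,P)=(\bbZ^d,\bbN^d)$, combined with the identification of Sehnem's covariance algebra with the Sims--Yeend algebra in this setting. First I would observe that, since $\Lambda$ is finitely aligned, \cite[Theorem 5.4]{RS03} guarantees that the associated product system $X(\Lambda)$ is compactly aligned over $\bbN^d$, so that the hypotheses of Theorem~\ref{T:CNP-envelope} are satisfied. As recorded above, the Nica tensor algebra $\cN\cT^+_{X(\Lambda)}$ coincides with $\cT_+(\Lambda)$ via a map carrying generators to generators.

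Applying Theorem~\ref{T:CNP-envelope} to $X(\Lambda)$ then yields a canonical isomorphism
\[
\ca_e(\cT_+(\Lambda)) = \ca_e(\cN\cT^+_{X(\Lambda)}) \cong \A \times_{X(\Lambda)} \bbN^d,
\]
implemented by maps sending generators to generators. It therefore remains only to identify Sehnem's covariance algebra $\A \times_{X(\Lambda)} \bbN^d$ with the universal Cuntz--Krieger algebra $\ca(\Lambda)$.

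For this last identification I would invoke the $\tilde{\phi}$-injectivity that is automatic over $\bbN^d$. Since every bounded subset of $\bbN^d$ has a maximal element, \cite[Lemma 3.15]{SY11} shows that $X(\Lambda)$ is $\tilde{\phi}$-injective. Consequently \cite[Proposition 4.6]{Seh+} gives a canonical, generator-preserving isomorphism $\A \times_{X(\Lambda)} \bbN^d \cong \cN\cO_{X(\Lambda)}$, and finally \cite[Theorem 5.4]{SY11} identifies $\cN\cO_{X(\Lambda)}$ with $\ca(\Lambda)$ under the bijection between CNP representations of $X(\Lambda)$ and Cuntz--Krieger $\Lambda$-families noted above. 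Composing this chain of generator-preserving isomorphisms gives $\ca_e(\cT_+(\Lambda)) \cong \ca(\Lambda)$, as claimed.

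There is no serious obstacle in this argument: all of the analytic content has been absorbed into Theorem~\ref{T:CNP-envelope}, and what remains is a matter of threading together established identifications. The only point requiring genuine care is the verification that $X(\Lambda)$ is $\tilde{\phi}$-injective, since this is precisely what licenses replacing Sehnem's algebra by the more concrete Sims--Yeend algebra $\cN\cO_{X(\Lambda)}$; over a general abelian, lattice ordered group this step may fail, but the order structure of $\bbN^d$ makes it automatic here.
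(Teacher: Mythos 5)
Your proposal is correct and follows essentially the same route as the paper: the authors likewise deduce the corollary from Theorem~\ref{T:CNP-envelope} together with the chain $\A \times_{X(\Lambda)} \bbN^d \cong \cN\cO_{X(\Lambda)} \cong \ca(\Lambda)$, using the automatic $\tilde{\phi}$-injectivity over $\bbN^d$ from \cite[Lemma 3.15]{SY11}, the identification in \cite[Proposition 4.6]{Seh+}, and \cite[Theorem 5.4]{SY11}. Nothing is missing; your emphasis on $\tilde{\phi}$-injectivity as the one point needing care matches the paper's own discussion preceding the corollary.
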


In Theorem \ref{T:unitary-dil}, the assumption of faithfulness of left actions $\phi_p$ for a product system $X$ over $(G,P)$ is easily seen to be a necessary assumption for every isometric Nica-covariant representation to have an isometric and fully-coisometric dilation. Less clear is the necessity of the assumption that $\phi_p(\cA) \subseteq \cK(X_p)$. However, in \cite[Example 5.5]{SZ08} it is shown that an isometric representation of a product system over $\bN$ fails to have an isometric and fully-coisometric dilation, even with faithful left actions. Our next goal is to show that in the multivariable context, the assumption of double commutation in Corollary \ref{C:unit-dil-SZ} cannot be dropped completely.

\begin{example} \label{E:Nica-needed}
There exists an isometric representation $\psi$ of a regular product system $X(\Lambda)$ coming from a finite sourceless graph $\Lambda$ with no isometric and fully coisometric dilation (and in particular this representation is not Nica-covariant).

\begin{figure}[hbt]
\begin{center}
\begin{tikzpicture}

\draw[<-, dashed]  
	(0.1,4) -- (4,4);
\draw[<-]  
	(0,0.1) -- (0,4);
\draw[<-] 
	(4,0.1) -- (4,4);
	
\draw[<-, dashed] 
	(0.1,0) -- (4,0);
	
\draw[->] 
	(0,4) arc (-45:300:0.7);
	
\draw[->, dashed] 
	(4,0) arc (135:475:0.7);
	
\draw[->, dashed] 
	(4,4) arc (225:570:0.7);
	
\draw[->] 
	(4,4) arc (225:575:0.9);
	
\draw[->]
	(0,4) arc (155:200:5);
	
\draw[<-,dashed]
	(0,0) arc (245:290:5);
	

\node at (0,0) {$\bullet$};
\node [below left] at (0,0) {$v_4$};
\node at (4,0) {$\bullet$};
\node [below right] at (4,0) {$v_2$};
\node at (4,4) {$\bullet$};
\node [above right] at (4,4) {$v_1$};
\node at (0,4) {$\bullet$};
\node [above left] at (0,4) {$v_3$};

\node [above] at (2,4) {$e$};
\node [right] at (4,2) {$f$};
\node [above] at (2,0) {$g_1$};
\node [below] at (2,-0.5) {$g_2$};
\node [left] at (-0.5,2) {$h_2$};
\node [right] at (0,2) {$h_1$};

\end{tikzpicture}
\end{center}
\end{figure}

We begin by describing a colored graph $G = (V,E)$ on four vertices $V:= \{v_1,v_2,v_3,v_4\}$. Around $v_1$ there are two loops $\ell_1^{(1)}$ and $\ell_1^{(2)}$ of distinct colors $1$ and $2$ respectively, around $v_2$ there is one loop $\ell_2$ of color $2$ and around $v_3$ there is one loop $\ell_3$ of color $1$. There are two edges $h_1,h_2$ from $v_3$ to $v_4$ with color $1$, two edges $g_1,g_2$ from $v_2$ to $v_4$ of color $2$, an edge $f$ from $v_1$ to $v_2$ of color $1$ and an edge $e$ from $v_1$ to $v_3$ of color $2$. This defines a $2$-colored graph as in the drawing. We obtain a higher-rank graph $\Lambda$ on it by specifying the commutation of any pair of concatenating edges of distinct colors
\begin{align} \label{eq:relations}
h_je = g_jf, \ \ \ell_3e = e \ell_1^{(1)}, \ \ \ell_2f = f \ell_1^{(2)}, \ \ \ell_1^{(1)}\ell_1^{(2)} = \ell_2^{(2)}\ell_1^{(1)}.
\end{align}

Our next goal is to define a $\Lambda$-family $\{S_{\lambda}\}$ for $\Lambda$. Let $\cM:= \cH^{(c)}_{1}\oplus \cH^{(c)}_{2} \oplus \cH^{(s)}_{1} \oplus \cH^{(s)}_{2}$ where $\cH_j^{(c)}$ and $\cH_j^{(s)}$ have orthonormal bases $\{\xi_n^{(j)}\}_{n \in \bN}$ and $\{\eta_n^{(j)}\}_{n\in \bN}$ for $j=1,2$ respectively. We first define operators on $\cM$.

Let $V : \cM \rightarrow \cH_1^{(c)} \oplus \cH_2^{(c)}$ be some isometry, and $T_j : \cM \rightarrow \cH_j^{(c)} \oplus \cH_j^{(s)}$ be unitaries for $j=1,2$. Define also a switching operator $W : \cM \rightarrow \cM$ of $\cH_1^{(s)}$ and $\cH_2^{(s)}$ by setting $W(\xi_n^{(j)}) = \xi_n^{(j)}$, $W(\eta_n^{(1)}) = \eta_n^{(2)}$ and $W(\eta_n^{(2)}) = \eta_n^{(1)}$.

We then let $\cH_{v_i}:=\cM$ be a $v_i$-th copy of $\cM$ in $\cH := \bigoplus_{i=1}^4 \cH_{v_i}$ identified via the co-isometry $J_{v_i}: \cH_{v_i} \rightarrow \cM$. We now define a $\Lambda$-family on $\cH$ as follows. The operator $S_{v_i}$ is the orthogonal projection onto $\cH_{v_i}$ for each $i=1,2,3,4$, loop operators are defined via $S_{\ell_1^{(j)}} = S_{v_1}$ for $j=1,2$, $S_{\ell_2} = S_{v_2}$ and $S_{\ell_3} = S_{v_3}$. Finally, non-loop edge operators are given by
$$
S_e = J_{v_3}^*V J_{v_1}, \ \ S_f = J_{v_2}^*VJ_{v_1} \ \ \text{and} ;
$$
$$
S_{h_j} = J_{v_4}^* T_j J_{v_3}, \ \ S_{g_j} = J_{v_4}^*WT_jJ_{v_2}
$$
for $j=1,2$. Then we get by definition of our operators that the relations in equation \eqref{eq:relations} are satisfied at the level of operators. Hence, it follows that the above family of operators extends to a $\Lambda$-family $S:=\{S_{\lambda}\}$ for the $2$-graph $\Lambda$.

However, this $\Lambda$-family fails condition (NC). As $g_1$ and $h_2$ have no minimal common extenders, it will suffice to show $S_{g_1}^*S_{h_2} \neq 0$ to see that (NC) fails. So we compute,
$$
S_{g_1}^*S_{h_2} = J_{v_2}T_1^*WJ_{v_4} J_{v_4}^*T_2J_{v_3} = J_{v_2}T_1^* W T_2 J_{v_3}.
$$
Thus, as $T_1^* W T_2$ is a unitary on $\cM$ and $J_{v_2}, J_{v_3}$ are co-isometries, we clearly see that $S_{g_1}^*S_{h_2} \neq 0$. Thus, $\{S_{\lambda}\}$ fails (NC).

We next show that any dilation of $S$ also fails (NC). Assume towards contradiction that $W=\{W_{\lambda}\}_{\lambda \in \Lambda}$ is a TCK $\Lambda$-family that dilates $S$. Since both $S$ and $W$ are $\Lambda$-families, they give rise to representations of the universal non-selfadjoint algebra generated by $\Lambda$-families. Hence, we may apply Sarason's theorem \cite[Exercise 7.6]{Pau02} to get that for any $\lambda \in \Lambda$ we have
$$
W_{\lambda} = \begin{bmatrix}
* & X_{\lambda} & * \\
0 & S_{\lambda} & * \\
0 & 0 & *
\end{bmatrix}
$$
By taking the $(2,2)$-corners of $W_{s(\lambda)} = W_{\lambda}^*W_{\lambda}$ we see that $S_{s(\lambda)} = X_{\lambda}^*X_{\lambda} + S_{\lambda}^*S_{\lambda}$ so that in fact $X_{\lambda} = 0$ for all $\lambda \in \Lambda$.

As $W$ satisfies (NC), and as $g_1$ and $h_2$ have no minimal extenders, we must have that $W_{g_1}^* W_{h_2} = 0$. By taking the $(2,2)$ corners of this equality we obtain that
$$
0 = X_{g_1}^*X_{h_2} + S_{g_1}^*S_{h_2} = S_{g_1}^*S_{h_2} \neq 0
$$
which is a contradiction. 

Hence, if $\psi$ is the representation of $X$ associate to the $\Lambda$-family $S$, $\psi$ fails to have an isometric and fully-coisometric dilation. Indeed, such a dilation would be Cuntz-Pimsner covariant in the sense that $\psi^{(p)} \circ \phi_p = \psi_e$ so that by \cite[Proposition 5.4]{Fow02} it would automatically be Nica-covariant. This will force the family $W$ associated to this dilation to satisfy (NC), while $W$ dilates $S$ in the above sense.
\end{example}

There is a class of product systems that generalizes those arising from higher-rank graphs as well as algebras arising from $\bN^N$-dynamical systems as in \cite{DFK14}. To describe this class, one needs to introduce the class of topological higher-rank graphs. We will not do this here but instead we direct the reader to the papers of Yeend~\cite{Yee06, Yee07} for the pertinent definitions and additional details. The following is an immediate corollary of Theorem~\ref{T:CNP-envelope} and \cite[Theorem 5.20]{CLSV11}.

\begin{corollary}
Let $\Lambda$ be a compactly aligned topological $k$-graph. Let $\G_{\Lambda}$ be Yeend's boundary path groupoid~\cite[Definition 4.8]{Yee07} and let $X(\Lambda)$ be the product system associated with $\Lambda$ as in \cite[Proposition 5.9]{CLSV11}. Then the C*-envelope of $\cN \cT^+_{X(\Lambda)}$ coincides with the groupoid $\ca$ algebra $\ca(\G_{\Lambda})$.
\end{corollary}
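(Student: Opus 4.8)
The plan is to obtain the corollary as the composition of two canonical identifications: the main theorem of the present paper on one side, and the known description of the co-universal algebra for topological $k$-graphs from \cite{CLSV11} on the other. The essential observation is that a topological $k$-graph $\Lambda$ gives rise to the product system $X(\Lambda)$ over the semigroup $\bbN^k$ sitting inside $\bbZ^k$, and $(\bbZ^k,\bbN^k)$ is an abelian, lattice ordered group. Since $\Lambda$ is assumed compactly aligned, so is $X(\Lambda)$, and hence all hypotheses of Theorem~\ref{T:CNP-envelope} are satisfied.

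First I would invoke Theorem~\ref{T:CNP-envelope} (together with Corollary~\ref{C:CNP-envelope}, which secures the existence of the co-universal object in this generality) to obtain a canonical isomorphism
\[
\ca_e(\N\T^+_{X(\Lambda)}) \;\cong\; \N\O^r_{X(\Lambda)}
\]
implemented by a map sending generators to generators. Next I would identify $\N\O^r_{X(\Lambda)}$ with the groupoid $\ca$-algebra $\ca(\G_{\Lambda})$ of Yeend's boundary path groupoid by appealing to \cite[Theorem~5.20]{CLSV11}. The only hypothesis required there is $\tilde{\phi}$-injectivity of $X(\Lambda)$; but because every bounded subset of $\bbN^k$ has a maximal element, \cite[Lemma~3.15]{SY11} guarantees that every product system over $\bbN^k$ is automatically $\tilde{\phi}$-injective, so this hypothesis comes for free. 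Composing the two isomorphisms then yields $\ca_e(\N\T^+_{X(\Lambda)}) \cong \ca(\G_{\Lambda})$, as desired.

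Since each step is an appeal to an already-established result, there is no substantive obstacle beyond bookkeeping. The one point I would be careful about is confirming that the co-universal algebra $\N\O^r_{X(\Lambda)}$ featured in Theorem~\ref{T:CNP-envelope} is defined by \emph{exactly} the same universal property (injective, gauge-compatible, Nica-covariant representations) as the object identified in \cite[Theorem~5.20]{CLSV11}, so that the two canonical identifications may be chained through their common copy of $X(\Lambda)$ without ambiguity. Once this compatibility of presentations is noted, the corollary is immediate.
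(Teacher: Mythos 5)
Your proposal is correct and is essentially the paper's own argument: the corollary is stated there as an immediate consequence of Theorem~\ref{T:CNP-envelope} (giving $\ca_e(\N\T^+_{X(\Lambda)}) \cong \N\O^r_{X(\Lambda)}$) combined with \cite[Theorem 5.20]{CLSV11} (giving $\N\O^r_{X(\Lambda)} \cong \ca(\G_\Lambda)$). Your additional bookkeeping --- noting that $(\bbZ^k,\bbN^k)$ is abelian and lattice ordered, that compact alignment of $\Lambda$ passes to $X(\Lambda)$, and that $\tilde{\phi}$-injectivity is automatic over $\bbN^k$ by \cite[Lemma 3.15]{SY11} so the two presentations of the co-universal algebra agree --- is exactly the right care to take and matches the framework the paper sets up in Section~\ref{Sec:envelope}.
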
 

In the work of Davidson, Fuller and Kakariadis \cite{DFK14}, non-commutative C*-dynamical systems $\alpha : \bN^N \rightarrow \End(A)$ and the associated Nica-Toeplitz $\N \T(A,\alpha)$ and Cuntz-Nica-Pimsner $\N \O(A,\alpha)$ crossed product algebras were considered. The majority of \cite[Subsection 4.3]{DFK14} is devoted to obtaining a multivariable tail-adding technique that allows them to prove a strong Morita equivalence between $\N \O(A,\alpha)$ and $\N \O(B, \beta)$ where $\beta : \bN^N \rightarrow \End(B)$ is \emph{injective}. This yields several consequences for non-injective systems, among which are a gauge-invariant uniqueness theorem \cite[Theorem 4.3.17]{DFK14} and a characterization of the C*-envelope \cite[Corollary 4.3.18]{DFK14}. In a recent paper of the first author with Kakariadis \cite{DK+}, one can recover \cite[Theorem 4.3.17]{DFK14} in an alternative way, and Theorem~\ref{T:CNP-envelope} can then be used to obtain \cite[Corollary 4.3.18]{DFK14} for non-degenerate systems as an immediate consequence. This makes for a substantial simplification of the characterization of the C*-envelope, as it no longer requires the use of a multivariable tail-adding technique.

\section{Hao-Ng isomorphism for product systems} \label{S:HNI}

In this section we examine the Hao-Ng isomorphism problem in the context of product systems. In what follows, we assume some familiarity with the theory of reduced crossed products of (not-necessarily-selfadjoint) operator algebras, as it appears in \cite{KR16}.  

Let $(G,P)$ be a quasi-lattice ordered group and $X=(X_p)_{p \in P}$ a compactly aligned product system over $P$ with coefficients in $\cA$. We henceforth regard each $X_p$ as an operator subspace of $\N \T_X$ via its representation on Fock space. Consider an action $\alpha: \G \rightarrow \Aut \N\T_X$ so that $\alpha_{g}(X_p)= X_p$, for all $g \in \G$ and $p \in P$. We call such an action $\alpha$ a \textit{generalized gauge action} and we say that the group $\G$ \textit{acts on }$X$. Clearly the action $\alpha$ restricts to a generalized gauge action $\alpha : \G \rightarrow \Aut  (\N\T^+_X )$. This generalized gauge action extends to a generalized gauge action on $\ca_e(\N \T^+_X)$ so that when $G$ is abelian, we have a generalized gauge action on $\ca_e(\N \T^+_X) \cong \N\O_X^r$ by Theorem \ref{T:CNP-envelope}. The crossed product of $\N\O_X^r$ by such actions plays an important role in $\ca$-algebra theory: in the case of a Cuntz or a Cuntz-Krieger $\ca$-algebra, examples of such actions are the so-called \textit{quasi-free} actions whose crossed products have been studied extensively \cite{EF, KK, LN}.

For each $p \in P$, let $X_p\cpr$ be the closed subspace of $\N\T_X\cpr$ generated by $C_c(\G , X_p) \subseteq \N\T_X\cpr$, i.e., all finite sums of the form $\sum_s x_s U_s$, $x_s \in X_p$, $s \in \G$. Just as in \cite[Lemma 7.11]{KR16}, one can verify that for every $p \in P$ we have that $X_p\cpr$ is an $\A\cpr$-correspondence with inner product defined by $\lip x,y\rip =x^*y$ for $x , y \in X_p \cpr\subseteq \N\T_X\cpr$. Furthermore, it is easily seen that $(X_p \cpr)(X_q \cpr) \subseteq X_{pq} \cpr$ is dense for any $p,q \in P$. Therefore $( X_p\cpr )_{p \in P}$ forms a product system that we denote as $X \cpr$. 

The $\ca$-algebra $\N\T_X \cpr$ also contains a non-selfadjoint crossed product algebra which we denote as $\N\T^+_X\cpr$, which is the norm closed algebra generated by $x_sU_s$ where $x_s \in X_p$ and $s\in \G$ for $p\in P$. In particular, we have the inclusions
\[
X\cpr \subseteq \N\T^+_X\cpr\subseteq \N\T_X \cpr.
\]
Hence, $\N\T^+_X\cpr$ is generated as a closed algebra by $X \cpr$ and $\N\T_X \cpr$ is a $\ca$-cover for $\N\T^+_X\cpr$.

\begin{remark}\label{R:clarify copy}
(i) The reader familiar with the theory of non-selfadjoint crossed products (as developed in \cite{KR16}) recognizes that $\N\T^+_X\cpr$ coincides with the reduced crossed product of the dynamical system $( \N\T_X^+, \G, \alpha)$, as defined in \cite[Definition 3.17]{KR16}. Indeed this follows from an immediate application of \cite[Corollary 3.15]{KR16}.

(ii) When $G$ is abelian, we see that $\N\T_X^+\cpr$ is completely isometrically isomorphic to the natural subalgebra of $\N\O^r_X \cpr$ generated by $x_sU_s$ where $x_s \in X_p$ and $s\in \G$ for $p\in P$. This follows from Theorem \ref{T:CNP-envelope} and \cite[Corollary 3.16]{KR16}. Hence, we also obtain an injective copy of $X\cpr$ sitting naturally inside $\N\O^r_X\cpr$.
\end{remark} 
 
In order to apply the theory of Section~\ref{Sec:envelope} we need to verify that compact alignment of a product system is preserved when taking crossed products by a generalized gauge action of a discrete group. We will use the following well-known fact.
 
 \begin{lemma} \label{basicKatsura}
 Let $X$ be a $\ca$-correspondence and $\psi: X\rightarrow B(\H)$ a representation on a Hilbert space $\H$. If $\psi$ is injective, then the $*$-homomorphism which extends the association
 \[
\K(X)\ni \theta_{x,y}\longmapsto \psi(x)\psi(y)^*\in B(\H)
\]
is also injective.
 \end{lemma}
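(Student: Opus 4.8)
The plan is to realize the $*$-homomorphism $\psi^{(1)}\colon \K(X)\to B(\H)$, $\theta_{x,y}\mapsto \psi(x)\psi(y)^*$, as a compression of the ampliated left action of $\K(X)$ on an interior tensor product, and then to show that this ampliated action is already faithful. Throughout, write $\pi\colon \A\to B(\H)$ for the coefficient $*$-homomorphism determined by $\psi$ (so that $\psi(x)^*\psi(y)=\pi(\langle x,y\rangle)$ and $\psi(xa)=\psi(x)\pi(a)$), and recall that here ``$\psi$ injective'' is understood to mean that $\pi$ is faithful --- equivalently, by the remark following Definition~\ref{D:isom}, that $\psi$ is isometric. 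This is exactly the form in which the lemma is later applied, where the coefficient algebra $\A\cpr$ is represented faithfully.

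First I would form the interior tensor product Hilbert space $X\otimes_{\pi}\H$, in which $\A$ acts on $\H$ through $\pi$, and define $\Phi\colon X\otimes_{\pi}\H\to\H$ on simple tensors by $\Phi(x\otimes\xi)=\psi(x)\xi$. Using $\langle x\otimes\xi,\,y\otimes\eta\rangle=\langle\xi,\pi(\langle x,y\rangle)\eta\rangle$ together with the relation $\psi(x)^*\psi(y)=\pi(\langle x,y\rangle)$, one checks that $\Phi$ preserves inner products on the algebraic tensor product; the same computation shows that $\Phi$ respects the $\A$-balancing, so it descends and extends to an isometry $\Phi\colon X\otimes_{\pi}\H\to\H$ with $\Phi^*\Phi=I$.

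Next I would verify the intertwining relation $\psi^{(1)}(T)\,\Phi=\Phi\,(T\otimes I)$ for all $T\in\K(X)$. It suffices to check this on rank-one operators $T=\theta_{x,y}$ and simple tensors $z\otimes\xi$, where both sides send $z\otimes\xi$ to $\psi(x)\psi(y)^*\psi(z)\xi$. Compressing then gives $T\otimes I=\Phi^*\psi^{(1)}(T)\Phi$, whence $\|T\otimes I\|\le\|\psi^{(1)}(T)\|$; in particular $\psi^{(1)}(T)=0$ forces $T\otimes I=0$. It therefore remains to prove that the ampliation $T\mapsto T\otimes I$ is injective on $\K(X)$.

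For that last point --- which is the crux, although it is entirely elementary --- I would argue directly from faithfulness of $\pi$. If $T\in\K(X)$ is nonzero, choose $x$ with $Tx\neq 0$; then $\langle Tx,Tx\rangle$ is a nonzero positive element of $\A$, so $\pi(\langle Tx,Tx\rangle)\neq 0$, and hence $\langle\xi,\pi(\langle Tx,Tx\rangle)\xi\rangle>0$ for some $\xi\in\H$. Since $\|(T\otimes I)(x\otimes\xi)\|^2=\langle\xi,\pi(\langle Tx,Tx\rangle)\xi\rangle$, this shows $T\otimes I\neq 0$, proving injectivity of the ampliation; combined with the bound of the previous paragraph this yields injectivity of $\psi^{(1)}$. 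The only genuinely delicate matters are the bookkeeping ones already flagged: confirming that $\Phi$ is well defined on the balanced tensor product, and fixing the convention that injectivity of $\psi$ means faithfulness of its coefficient homomorphism $\pi$.
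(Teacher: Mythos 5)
Your proof is correct, but there is no in-paper proof to measure it against: the authors state Lemma~\ref{basicKatsura} as a ``well-known fact'' and give no argument (it is essentially Katsura's lemma on representations of $\K(X)$, cf.\ \cite{Kat04b}), so your write-up supplies a proof the paper deliberately omits. All three of your steps check out: $\Phi$ is a well-defined isometry on $X\otimes_\pi\H$ (the semi-inner product on the algebraic tensor product already annihilates the balancing relations, as you note), the intertwining $\psi^{(1)}(T)\Phi=\Phi(T\otimes I)$ holds on rank-ones and extends by linearity and continuity, and faithfulness of $\pi$ makes the ampliation $T\mapsto T\otimes I$ injective. Two comparative remarks. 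First, your intertwining relation evaluated at $z\otimes\xi$ is exactly the classical identity $\psi^{(1)}(T)\psi(z)=\psi(Tz)$ for $T\in\K(X)$, and from it the lemma follows in two lines without any tensor-product apparatus: if $\psi^{(1)}(T)=0$ then $\psi(Tz)=0$ for all $z$, hence $Tz=0$ for all $z$, hence $T=0$ --- and notice that this only uses injectivity of $\psi$ \emph{as a linear map}, i.e., the lemma holds under the literal hypothesis as stated, whereas your final step genuinely consumes the stronger reading that $\pi$ is faithful. (Your reading is the standard one and is the hypothesis actually in force at the paper's point of application, the Fock-type representation of $X\cpr$, so nothing is lost; but the short route makes the convention issue you flag moot. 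Also, strictly speaking, ``$\pi$ faithful'' and ``$\psi$ isometric'' are equivalent only when the inner products $\langle X,X\rangle$ generate $\A$; faithfulness of $\pi$ always implies $\psi$ is isometric, which is the direction you use.) Second, what your heavier route buys is a quantitative bonus beyond the lemma: since $\pi$ faithful gives $\|T\otimes I\|=\|T\|$, your compression inequality $\|T\otimes I\|\le\|\psi^{(1)}(T)\|\le\|T\|$ shows $\psi^{(1)}$ is in fact isometric on $\K(X)$, not merely injective.
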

 
 \begin{proposition}
Let $(G,P)$ be an abelian, lattice ordered group and $X$ a product system over $P$. Let $\alpha : \G \rightarrow \Aut ( \N\T_X )$ be a generalized gauge action by a discrete group $\G$. If $X$ is compactly aligned, then $X \cpr$ is also compactly aligned.
\end{proposition}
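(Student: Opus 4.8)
The plan is to reduce compact alignment of $X\cpr$ to that of $X$ by working inside a faithful representation of the ambient $\ca$-algebra $\N\T_X\cpr$ and exploiting the invariance $\alpha_g(X_p)=X_p$. Since $(G,P)$ is abelian and lattice ordered we always have $p\vee q<\infty$, so I only need to show $\iota_p^{p\vee q}(S)\iota_q^{p\vee q}(T)\in\cK(X_{p\vee q}\cpr)$ for $S\in\cK(X_p\cpr)$ and $T\in\cK(X_q\cpr)$. First I would fix a faithful nondegenerate representation $\psi$ of $\N\T_X\cpr$ on a Hilbert space $\H$. Restricting $\psi$ to each fibre gives an injective isometric representation of the product system $X\cpr$, so by Lemma~\ref{basicKatsura} the induced maps $\psi^{(p)}\colon \cK(X_p\cpr)\to B(\H)$, $\theta_{\xi,\eta}\mapsto \psi(\xi)\psi(\eta)^*$, are injective. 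Because the maps $\psi^{(p)}$ and $\iota_p^{p\vee q}$ are linear and norm continuous, it suffices to treat $S=\theta_{\xi_1,\xi_2}$ and $T=\theta_{\eta_1,\eta_2}$ with $\xi_i=x_iU_{s_i}$ and $\eta_i=y_iU_{t_i}$ ranging over the dense subsets $C_c(\G,X_p)$ and $C_c(\G,X_q)$.

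The first key computation is to rewrite these concrete products inside $\N\T_X\cpr$. Using $U_g b=\alpha_g(b)U_g$ and that each $\alpha_g$ is a $*$-automorphism preserving every fibre, one finds
\[
\psi^{(p)}\big(\theta_{x_1U_{s_1},x_2U_{s_2}}\big)=\psi\big(i_X^{(p)}(\theta_{x_1,\alpha_{g}(x_2)})\,U_{g}\big),\qquad g=s_1s_2^{-1},
\]
where $i_X^{(p)}$ denotes the $*$-homomorphism $\cK(X_p)\to\N\T_X$ induced by the universal representation $i_X$, and similarly for $q$. Multiplying the two expressions and commuting the unitary past the second factor, the invariance $\alpha_{g}\big(i_X^{(q)}(\cK(X_q))\big)=i_X^{(q)}(\cK(X_q))$ rewrites the product as $\psi\big(i_X^{(p)}(\kappa_1)\,i_X^{(q)}(\kappa_2)\,U_{h}\big)$ for suitable $\kappa_1\in\cK(X_p)$, $\kappa_2\in\cK(X_q)$ and $h\in\G$. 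Here I invoke the hypothesis: since $X$ is compactly aligned, Nica covariance of $i_X$ gives $i_X^{(p)}(\kappa_1)i_X^{(q)}(\kappa_2)=i_X^{(p\vee q)}(\kappa_1\vee\kappa_2)$ with $\kappa_1\vee\kappa_2\in\cK(X_{p\vee q})$. Thus $\psi^{(p)}(S)\psi^{(q)}(T)=\psi\big(i_X^{(p\vee q)}(\mu)U_h\big)$ for some $\mu\in\cK(X_{p\vee q})$, and a short reverse application of the same formula identifies the right-hand side as $\psi^{(p\vee q)}(\nu)$ for an explicit $\nu\in\cK(X_{p\vee q}\cpr)$.

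It remains to transfer this concrete identity back into an intrinsic statement about $\cK(X_{p\vee q}\cpr)$. For this I would establish, for the isometric representation $\psi$ of $X\cpr$, the identity
\begin{equation*}
\bar\psi^{(p\vee q)}\big(\iota_p^{p\vee q}(S)\big)=\psi^{(p)}(S),\qquad S\in\cK(X_p\cpr),
\end{equation*}
where $\bar\psi^{(p\vee q)}\colon \cL(X_{p\vee q}\cpr)\to B(\H)$ is the strict-continuous extension of $\psi^{(p\vee q)}$. This is checked on rank-one $S$ by evaluating both sides on vectors $\psi_{p\vee q}(\xi\zeta)h$ via the factorization $X_{p\vee q}\cpr=(X_p\cpr)\otimes(X_r\cpr)$ with $r=p^{-1}(p\vee q)$, and it requires no covariance hypothesis. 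Applying the $*$-homomorphism $\bar\psi^{(p\vee q)}$ then yields $\bar\psi^{(p\vee q)}\big(\iota_p^{p\vee q}(S)\iota_q^{p\vee q}(T)\big)=\psi^{(p)}(S)\psi^{(q)}(T)=\psi^{(p\vee q)}(\nu)$. Since $\psi^{(p\vee q)}$ is injective and nondegenerate on $\cK(X_{p\vee q}\cpr)$, its multiplier extension $\bar\psi^{(p\vee q)}$ is injective on $\cL(X_{p\vee q}\cpr)$, whence $\iota_p^{p\vee q}(S)\iota_q^{p\vee q}(T)=\nu\in\cK(X_{p\vee q}\cpr)$, as required.

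I expect the main obstacle to be precisely this last step: the careful handling of the strict-continuous extensions, together with verifying that $\bar\psi^{(p\vee q)}$ stays injective, is what allows one to deduce an intrinsic compactness statement from an equality of concrete operators. The algebraic manipulation in the middle paragraph is routine once $\alpha_g(X_p)=X_p$ is used to keep everything inside the fibres, and the appeal to compact alignment of $X$ is the single point at which the hypothesis is consumed.
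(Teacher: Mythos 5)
Your outer architecture is sound and genuinely different from the paper's: the paper never leaves the Fock picture, writing $(S\otimes I)(T\otimes I)(zU_h)=U_g\big((\theta_{x,y}\otimes I)(\theta_{\hat x,\hat y}\otimes I)z\big)U_h$, approximating the middle factor by finite-rank $S_n\in\K(X_{s\vee t})$, and exhibiting $(S\otimes I)(T\otimes I)$ as the limit of explicit finite-rank operators $R_n$ on $X_{s\vee t}\cpr$ via the bound $\|R_n-R_m\|\leq\|S_n-S_m\|$ together with an SOT identification; your route instead pulls the problem back through an injective multiplier extension. However, your transfer step contains a genuine error, and it sits exactly at the point you flagged as requiring nothing: the identity $\bar\psi^{(p\vee q)}\big(\iota_p^{p\vee q}(S)\big)=\psi^{(p)}(S)$ is \emph{false} in general, even for Nica-covariant $\psi$. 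Checking on vectors $\psi_{p\vee q}(\xi\zeta)h$ only tests equality on $\overline{\psi_{p\vee q}(X_{p\vee q}\cpr)\H}$, the range of $\bar\psi^{(p\vee q)}(I)$; off that subspace the left-hand side vanishes by strict continuity, while $\psi^{(p)}(S)$ need not. Concretely, take the trivial system over $(\bZ,\bN)$ with trivial $\G$, $\psi$ the identity representation of the Toeplitz algebra on $\ell^2(\bN)$ with unilateral shift $V$, $p=1$, $q=p\vee q=2$, $S=1\in\K(X_1)=\bC$: then $\bar\psi^{(2)}(\iota_1^2(1))=V^2V^{*2}\neq VV^*=\psi^{(1)}(1)$, the two differing on $e_1$. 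The correct statement is $\bar\psi^{(p\vee q)}\big(\iota_p^{p\vee q}(S)\big)=\psi^{(p)}(S)\,\bar\psi^{(p\vee q)}(I)$.

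With the projections restored, the identity you actually need, $\bar\psi^{(p\vee q)}\big(\iota_p^{p\vee q}(S)\iota_q^{p\vee q}(T)\big)=\psi^{(p)}(S)\psi^{(q)}(T)$, requires precisely a covariance hypothesis, namely condition \eqref{D:NC2} for the restricted representation, so that
\begin{align*}
\psi^{(p)}(S)\psi^{(q)}(T)&=\psi^{(p)}(S)\,\bar\psi^{(p)}(I)\bar\psi^{(q)}(I)\,\psi^{(q)}(T)\\
&=\psi^{(p)}(S)\,\bar\psi^{(p\vee q)}(I)\,\psi^{(q)}(T)=\bar\psi^{(p\vee q)}\big(\iota_p^{p\vee q}(S)\iota_q^{p\vee q}(T)\big).
\end{align*}
Fortunately this is available in your setting and the gap is repairable: composing $i_X$ with $\psi|_{\N\T_X}$ yields a Nica-covariant representation of $X$ by \cite[Proposition 5.9]{Fow02}, condition \eqref{D:NC1} upgrades to \eqref{D:NC2} by \cite[Proposition 5.6]{Fow02}, and since each $U_g$ is unitary we have $\overline{\psi(X_p\cpr)\H}=\overline{\psi(X_p)\H}$, so the range projections $\bar\psi^{(p)}(I)$ computed for $X\cpr$ coincide with those computed for $X$. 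By contrast, the step you predicted would be the main obstacle is the easy part: if $\psi^{(p\vee q)}$ is injective on $\K(X_{p\vee q}\cpr)$ and $\bar\psi^{(p\vee q)}(R)=0$, then $\psi^{(p\vee q)}(RK)=0$ for all compact $K$, forcing $R=0$. So your proof succeeds once the missing projection is inserted and Nica covariance of the restriction is invoked, but as written it breaks at the claim that no covariance is needed.
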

 
 \begin{proof}
 Let $s, t \in P$. Consider arbitrary $x',y' \in X_s$, $\hat{x}', \hat{y}' \in X_t$ and $g', \hat{g}', h, \hat{h}'\in \G$ and let 
 \[
 S:= \theta_{x'U_{g'}, y'U_{h'}}\in \K(X_s \rtimes_{\alpha}^r\G) \mbox{ and } T:= \theta_{\hat{x}'U_{\hat{g}'}, \hat{y}'U_{\hat{h}'}}\in \K(X_t \rtimes_{\alpha}^r\G). 
 \]
If we verify that for such special compact operators $S$ and $T$, the operator $(S\otimes I)(T\otimes I) \in \L(X_{s\vee t} \rtimes_{\alpha}^r\G)$ is compact, then the compactness of $(S\otimes I)(T\otimes I)$ for arbitrary compact operators $S\in \K(X_s \rtimes_{\alpha}^r\G)$ and $T\in \K(X_t \rtimes_{\alpha}^r\G)$ will follow from an easy density argument.

Thus, let $z \in X_{s\vee t}$ and $h \in \G$. Then it is easy to see that there exist $x, y \in X_s$ and $\hat{x}, \hat{y} \in X_t$ so that 
\begin{equation} \label{easycalign}
(S\otimes I)(T\otimes I) (zU_h)= U_g\big((\theta_{x,y}\otimes I)(\theta_{\hat{x}, \hat{y}} \otimes I)z \big)U_h.
\end{equation}
Since $X$ is compactly aligned, 
\begin{equation} \label{thetas}
(\theta_{x,y}\otimes I)(\theta_{\hat{x}, \hat{y}} \otimes I) = \lim_n S_n ,
\end{equation}
where $S_n := \sum_i \, \theta_{x_{n,i}, y_{n,i}}$,$n\in \bbN$, are generalized finite rank operators with $x_{n,i}, y_{n,i} \in X_{s\vee t}$, for all $i$ and $n\in \bbN$. For each $n \in \bbN$ we define
\[
R_n := \sum_{i} \theta_{\alpha_g(x_{n,i})U_g, \,\,y_{n,i}I}\in \K(X_{s\vee t} \rtimes_{\alpha}^r\G).
\]
For a ``rank one" operator $\theta_{z,w}$ with $z,w\in X_p$ (or $z,w \in X \rtimes_{\alpha}^r \G$), Lemma \ref{basicKatsura} allows us to identify $\theta_{z,w}$ with a product of operators $zw^* := l_p(z)l_p(w)^*$ of the left regular Fock representation. We use this to show that the sequence $\{R_n\}_{n=1}^{\infty}$ is bounded. Indeed,
\begin{align*}
\|R_n\|&= \| \sum_i\alpha_g(x_{n,i})U_gy_{n,i}^*\|  = \|U_g\sum_i \, x_{n,i}y_{n,i}^*\| \\
           &=\|\sum_i \, x_{n,i}y_{n,i}^*\| = \|\sum_i \, \theta_{x_{n,i}, y_{n,i}}\| \\
           &=\|S_n\|,
\end{align*}
As $\{S_n\}_{n=1}^{\infty}$ is convergent, we see that $\{R_n\}_{n=1}^{\infty}$ is bounded. We next claim that $\textsc{sot--}\lim_nR_n= (S\otimes I)(T\otimes I)$.

Indeed, since $\{R_n\}_{n=1}^{\infty}$ is bounded, it suffices to verify the claim on a suitable dense subset of $X_{s \vee t}$. Let $z \in X_{s \vee t}$ and $ h \in \G$. Then by \eqref{thetas},
\begin{align*}
\lim_nR_n(zU_h) &=\lim_n \sum_i \, \alpha_g(x_{n,i})U_gy_{n,i}^*zU_h =\lim_n U_g\big(\sum_i \, x_{n,i}y_{n,i}^*z\big)U_h \\
		&= \lim_n\, U_g(S_nz)U_h = U_g\big((\theta_{x,y}\otimes I)(\theta_{\hat{x}, \hat{y}} \otimes I)z \big)U_h. 
\end{align*}
However, by \eqref{easycalign}, 
$$
U_g\big((\theta_{x,y}\otimes I)(\theta_{\hat{x}, \hat{y}} \otimes I)z \big)U_h =  (S\otimes I)(T\otimes I)(zU_h)
$$ 
and the claim follows.

In light of the previous claim, to finish the proof it will suffice to show that $(S\otimes I)(T\otimes I)$ is compact. Thus, assume $\{R_n\}_{n=1}^{\infty}$ is Cauchy sequence, and examine the norm of $R_n-R_m$ for $m,n \in \bbN$, on a dense subset of the unit ball of $X_{s \vee t}$. Indeed, by Lemma~\ref{basicKatsura},
\begin{align*}
\big\|(R_n-R_m)(\sum_j \, z_jU_{h_j})\big\|&=\big\|U_g\big(\sum_{i,j} x_{n, i}y_{n,i}^*z_jU_{h_j} -\sum_{k,j} x_{m, k}y_{m,k}^*z_jU_{h_j}\big)\| \\
&=\big\|\sum_{i,j} x_{n, i}y_{n,i}^*z_jU_{h_j} -\sum_{k,j} x_{m, k}y_{m,k}^*z_jU_{h_j}\big\|\\
&=\big\|\big(\sum_i\, x_{n, i}y_{n,i}^*-\sum_k \, x_{m,k}y_{m,k}^*\big) 
\big(\sum_j \, z_jU_{h_j}\big)\big\| \\
&\leq\big\| \sum_i\, x_{n, i}y_{n,i}^*-\sum_k \, x_{m,k}y_{m,k}^*\big\|\big\| \sum_j \, z_jU_{h_j}\big\| \\
&\leq \big\|(S_n-S_m)\big\|.
\end{align*}
The proof is thus concluded, and $X \rtimes_{\alpha}^r \G$ is compactly aligned.
 \end{proof} 
 
Now we need  a good supply of Nica-covariant representations for the crossed product system $X\cpr$.
 \begin{lemma} \label{L:supply}
 Let $(G,P)$ be an abelian, lattice ordered group and $X$ a product system over $P$. Let $\alpha : \G \rightarrow \Aut ( \N\T_X )$ be a generalized gauge action by a discrete group $\G$. If $\pi \colon \cN\cT_X \rightarrow B(\H)$ is a $*$-representation, then the restriction of the regular representation 
 \[
 \Ind_{\pi} \colon \cN\cT_X \cpr \rightarrow B(\H \otimes \ell^2(\G))
 \]
 on the product system $X\cpr \subseteq \cN\cT_X \cpr$ forms a Nica-covariant representation of $X\cpr$.
 \end{lemma}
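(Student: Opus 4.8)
The plan is to exhibit $\Psi := \Ind_\pi|_{X\cpr}$ as an isometric Nica-covariant representation of $X\cpr$ by reducing everything to the Nica-covariance of the representation $\pi\circ i_X$ of the original system $X$. First I would observe that the canonical inclusion $X\cpr\hookrightarrow\N\T_X\cpr$ is itself an isometric representation of the product system $X\cpr$ (its fibres carry the $\A\cpr$-correspondence structure with inner product $\langle x,y\rangle=x^*y$ recalled above), so that composing with the $\ast$-homomorphism $\Ind_\pi$ produces an isometric representation $\Psi$ of $X\cpr$ on $\H\otimes\ell^2(\G)$. Since $\Psi$ is a concrete representation, \cite[Proposition 5.6]{Fow02} allows me to check Nica-covariance through the projection form~\eqref{D:NC2}; and because $(G,P)$ is abelian and lattice ordered, $p\vee q<\infty$ always, so it suffices to prove
\[
\Psi^{(p)}(I)\,\Psi^{(q)}(I)=\Psi^{(p\vee q)}(I)\qquad(p,q\in P).
\]

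The heart of the argument is an explicit identification of the projection $\Psi^{(p)}(I)$, that is, of the range $\overline{\Psi_p(X_p\cpr)(\H\otimes\ell^2(\G))}$. Writing $\{\delta_g\}_{g\in\G}$ for the standard orthonormal basis of $\ell^2(\G)$ and using the defining formulas for the regular representation $\Ind_\pi$, a short computation on the generators $xU_g$ (with $x\in X_p$, $g\in\G$) gives
\[
\Psi_p(xU_g)(\eta\otimes\delta_k)=\pi\big(\alpha_{(gk)^{-1}}(x)\big)\eta\otimes\delta_{gk},\qquad \eta\in\H,\ k\in\G.
\]
Here is where the hypothesis on $\alpha$ enters decisively: since $\alpha$ is a generalized gauge action we have $\alpha_{m^{-1}}(X_p)=X_p$ for every $m\in\G$, so the fibre of the range over each basis vector $\delta_m$ is exactly $\overline{\pi(X_p)\H}$, \emph{independently of} $m$. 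Letting $m=gk$ range over $\G$ I would conclude that
\[
\Psi^{(p)}(I)=Q_p\otimes I_{\ell^2(\G)},
\]
where $Q_p$ denotes the orthogonal projection of $\H$ onto $\overline{\pi(X_p)\H}$.

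To finish, I would note that $\varrho:=\pi\circ i_X$ is an isometric Nica-covariant representation of $X$ on $\H$, since composing the universal representation $i_X$ with any $\ast$-representation of $\N\T_X$ preserves Nica-covariance by \cite[Proposition 5.9]{Fow02}. Its associated projection $\varrho^{(p)}(I)$ is precisely the projection onto $\overline{\varrho_p(X_p)\H}=\overline{\pi(X_p)\H}$, so $Q_p=\varrho^{(p)}(I)$. Applying~\eqref{D:NC2} to the Nica-covariant representation $\varrho$ then yields $Q_pQ_q=Q_{p\vee q}$, whence
\[
\Psi^{(p)}(I)\Psi^{(q)}(I)=(Q_pQ_q)\otimes I=Q_{p\vee q}\otimes I=\Psi^{(p\vee q)}(I),
\]
which is the desired relation~\eqref{D:NC2} for $\Psi$, and hence $\Psi$ is Nica-covariant.

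The step I expect to be the main obstacle is the identification $\Psi^{(p)}(I)=Q_p\otimes I_{\ell^2(\G)}$: one must compute the range of $\Psi_p(X_p\cpr)$ fibrewise over $\ell^2(\G)$ and verify that the $\alpha$-invariance of each $X_p$ is exactly what makes this fibre constant in the group variable. Everything else is then a formal consequence of the already-established Nica-covariance of $\pi\circ i_X$.
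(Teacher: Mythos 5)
Your proposal is correct and takes essentially the same route as the paper: both arguments rest on the identification $\Ind_\pi^{(p)}(I)=\pi^{(p)}(I)\otimes I_{\ell^2(\G)}$ (the paper gets it in one line from non-degeneracy of $X_p\cpr$, you via the explicit fibrewise formula that makes the role of $\alpha_g(X_p)=X_p$ transparent), after which Nica-covariance reduces to that of $\pi\circ i_X$ through the projection form \eqref{D:NC2}. Your final computation $(Q_p\otimes I)(Q_q\otimes I)=Q_{p\vee q}\otimes I$ is precisely the step the paper dismisses as ``easily verified.''
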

 
 \begin{proof}
Let $p \in P$. Since the correspondence $X\cpr$ is non-degenerate, we have that $\Ind_{\pi}^{(p)}(I)$ projects on the subspace
\begin{align*}
\Ind_{\pi}\big( (X\cpr)_p\big)(\H \otimes \ell^2(\G)) &= \Ind_{\pi} (X_p\cpr)(\H \otimes \ell^2(\G)) \\
									&= \pi(X_p)(\H)  \otimes \ell^2(\G)
\end{align*}
and so $\Ind_{\pi}^{(p)}(I) = \pi^{(p)}(I) \otimes I$. That $\Ind_{\pi}$ yields a Nica-covariant isometric representation of $X\cpr$ is then easily verified. 
\end{proof}
 
\begin{theorem} \label{HNtensor2}
Let $(G,P)$ be an abelian, lattice ordered group and $X$ a compactly aligned product system over $P$ with coefficients in $\A$. Let\break $\alpha : \G \rightarrow \Aut  ( \N\T_X )$ be a generalized gauge action by a discrete group $\G$. Then
\[
\cN\cT^{+}_X \cpr \simeq \cN\cT^+_{X\cpr}.
\]
Therefore, we have $$\cenv\big (\cN\cT^+_{X} \cpr \big) \simeq  \N\O^r_{X\cpr}.$$
\end{theorem}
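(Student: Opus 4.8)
The plan is to prove the isomorphism $\cN\cT^{+}_X \cpr \simeq \cN\cT^+_{X\cpr}$ first and then deduce the statement about $\ca$-envelopes as a formal consequence. For the first isomorphism, recall that $\cN\cT^+_{X\cpr}$ is by definition the universal norm-closed operator algebra generated by a Nica-covariant isometric representation of the product system $X\cpr$; hence there is a canonical completely contractive homomorphism $\Phi$ from $\cN\cT^+_{X\cpr}$ onto $\cN\cT^+_X\cpr$ sending the generators $x_sU_s$ of $\cN\cT^+_{X\cpr}$ to the corresponding elements inside $\cN\cT_X\cpr$, because the copy of $X\cpr$ living inside $\cN\cT_X\cpr$ is itself a Nica-covariant isometric representation. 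The content of the theorem is that $\Phi$ is a complete isometry, equivalently that the concrete Fock-type representation of $X\cpr$ coming from $\cN\cT_X\cpr$ is universal among Nica-covariant representations at the level of the tensor algebra.

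First I would set up the ``good supply'' of Nica-covariant representations furnished by Lemma~\ref{L:supply}: starting from the Fock representation $l\colon X\to \L(\F_X)$, whose induced $*$-representation $l_*\colon \cN\cT_X\to\L(\F_X)$ is faithful (as noted after Theorem~\ref{T:toep-uniq}), I pass to the regularisation $\Ind_{l_*}\colon \cN\cT_X\cpr\to B(\F_X\otimes\ell^2(\G))$. By the theory of reduced crossed products, $\Ind_{l_*}$ is a faithful $*$-representation of $\cN\cT_X\cpr$, and by Lemma~\ref{L:supply} its restriction to $X\cpr$ is a Nica-covariant isometric representation whose $e$-component is faithful on $\A\cpr$. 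This gives a completely isometric embedding of $\cN\cT^+_X\cpr$ into $B(\F_X\otimes\ell^2(\G))$ realising it as the operator algebra generated by an isometric Nica-covariant representation of $X\cpr$. The key step is then to show that this representation is the \emph{universal} one at the tensor-algebra level, i.e. that any isometric Nica-covariant representation of $X\cpr$ factors completely contractively through it. This is exactly the statement that the induced $*$-representation satisfies the faithfulness hypothesis of Fowler's uniqueness theorem (Theorem~\ref{T:toep-uniq}): I would verify that for each finite $F\subseteq P\setminus\{e\}$ the compression $a\mapsto (\Ind_{l_*})_e(a)\prod_{p\in F}\bigl(I-(\Ind_{l_*})^{(p)}(I)\bigr)$ is faithful on $\A\cpr$, using the tensor-factorisation $(\Ind_{l_*})^{(p)}(I)=l_*^{(p)}(I)\otimes I$ from the proof of Lemma~\ref{L:supply} together with the known faithfulness of the corresponding Fock compression for $X$ itself. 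Once this is in place, $\Phi$ is a complete isometry and the first isomorphism follows.

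The main obstacle I anticipate is precisely this universality/faithfulness verification for the crossed-product system. The subtlety is that Fowler's theorem applies to $\cN\cT_{X\cpr}$ and one must match up the $\tilde\phi$-type compressions for $X\cpr$ with those of $X$; the tensor-factorisation of the projections $\Ind_{\pi}^{(p)}(I)=\pi^{(p)}(I)\otimes I$ makes the diagonal ``corner'' argument work, but one has to be careful that compact alignment of $X\cpr$ (established in the preceding Proposition) is what guarantees the Nica-covariance relations survive in the crossed product, and that the coefficient algebra has genuinely become $\A\cpr$ rather than merely $\A$. I would lean on Remark~\ref{R:clarify copy}(i), which identifies $\cN\cT^+_X\cpr$ with the reduced crossed product of the operator-algebraic dynamical system $(\cN\cT^+_X,\G,\alpha)$ via \cite[Corollary 3.15]{KR16}, so that the complete isometry can be read off from the functoriality of reduced crossed products rather than reproved by hand.

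Finally, the $\ca$-envelope statement is a formal corollary. Since $(G,P)$ is abelian and lattice ordered and $X\cpr$ is compactly aligned (by the Proposition), Theorem~\ref{T:CNP-envelope} applies directly to the product system $X\cpr$ and gives
\[
\cenv\bigl(\cN\cT^+_{X\cpr}\bigr)\simeq \N\O^r_{X\cpr}.
\]
Combining this with the first isomorphism $\cN\cT^+_X\cpr\simeq\cN\cT^+_{X\cpr}$ — which is a complete isometry of operator algebras and hence induces an isomorphism of $\ca$-envelopes, as the $\ca$-envelope is an invariant of the completely isometric isomorphism class — yields
\[
\cenv\bigl(\cN\cT^+_X\cpr\bigr)\simeq \cenv\bigl(\cN\cT^+_{X\cpr}\bigr)\simeq \N\O^r_{X\cpr},
\]
which is the desired conclusion. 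The only point to double-check here is that the isomorphism of the first part sends generators to generators, so that the identification is canonical and compatible with the gauge structure used in Theorem~\ref{T:CNP-envelope}; this is immediate from the explicit description of $\Phi$ on the spanning elements $x_sU_s$.
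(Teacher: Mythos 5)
Your proposal is correct and follows essentially the same route as the paper's proof: realize $\N\T^+_X\cpr$ concretely via an induced regular representation, verify Fowler's criterion (Theorem~\ref{T:toep-uniq}) for the restricted representation of $X\cpr$ by exhibiting a ``vacuum'' corner dominated by $\prod_{p\in F}\big(I-\psi^{(p)}(I)\big)$ on which the full $\A\cpr$-norm is attained, and then apply Theorem~\ref{T:CNP-envelope} to the compactly aligned system $X\cpr$. The only difference is cosmetic: you induce from the Fock representation $l_*$ and use the vacuum projection $P_e\otimes I$, whereas the paper induces from $\pi_*$ with $\pi=i_X\otimes V$ and uses the corner $\hat{Q}_e=I\otimes Q_e\otimes I$ — both corners play the identical role in the norm estimate that feeds Fowler's theorem.
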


\begin{proof} Let $i=i_X \colon X \rightarrow B(\H)$ be the universal Nica-covariant representation of $X$ and let 
\[
V\colon P \longrightarrow B(\ell^2(P)); p \longmapsto V_p
\]
be the left regular representation of $P$ on $\ell^2(P)$. Let $\pi := i\otimes V$ be defined as in \eqref{eq:tensor rep}. By Lemma~\ref{L:Nicatensor} $\pi$ is a Nica-covariant representation of $X$. It is easily seen to satisfy the requirements of Fowler's Theorem \ref{T:toep-uniq} and therefore it induces a faithful representation $\pi_* : \cN\cT_X \rightarrow B(\H \otimes \ell^2(P))$.

Consider the induced regular representation 
\[
\Ind_{\pi_*}\colon \cN\cT_X \cpr \longrightarrow B\big( \H \otimes \ell^2(P)\otimes \ell^2(\G)\big)
\]
and let 
\[
\psi\colon X \cpr \longrightarrow B\big( \H \otimes \ell^2(P)\otimes \ell^2(\G)\big)
\]
be the restriction of $\Ind_{\pi_*}$ to $X \cpr \subseteq \N\T \cpr$. By Lemma~\ref{L:supply}, $\psi$ is Nica-covariant. 

We claim that $\psi$ satisfies the requirements of Fowler's Theorem for $X \cpr$. Indeed let $Q_p\in \ell^2(P)$ be the projection on the one dimensional subspace corresponding to the characteristic function of $p \in P$ and let 
$$\hat{Q}_p = I\otimes Q_{p}\otimes I \in B\big( \H \otimes \ell^2(P)\otimes \ell^2(\G)\big).
$$ 
If $f \in C_c(\G, X_p)$ is a finite sum in $X_p\cpr$ for $ p \in P$,  and $h \in  \H \otimes \ell^2(P)\otimes \ell^2(\G)$, then
\begin{align*}
\big(\hat{Q}_e \psi_p(f)h\big)(s) &= \hat{Q}_e (\Ind_{\pi_*} f)h(s) \\ &= \sum_{s \in \G} (I\otimes Q_{e}) \pi_*\big(\alpha_{t}^{-1}(f(s))\big)h(s^{-1}t)\\
&= \sum_{s \in \G} (I\otimes Q_{e})\big(i_p(\alpha_{t}^{-1}(f(s))) \otimes V_p\big)h(s^{-1}t)\\
&=\sum_{s\in \G} \big(i_p(\alpha_{t}^{-1}(f(s))) \otimes Q_eV_p\big)h(s^{-1}t) = 0
\end{align*}
Hence $\hat{Q}_e  \psi^{(p)}(I)=0$ and so each projection $I -\psi^{(p)}(I)$, $p \in P$, dominates $\hat{Q}_e$. Therefore if $F\subseteq P\backslash \{e\}$ is any finite set, then the product $ \prod_{p\in F} \big(I -\psi^{(p)}(I)\big)$ also dominates $\hat{Q}_e$. Hence
\begin{equation} \label{normeq}
\big\|\psi_e(f) \prod_{p\in F} \big(I -\psi^{(p)}(I)\big)\big\| \geq\big\| \psi_e(f) \hat{Q}_e\big\| =\big\|\Ind_{\pi_*}(f)\hat{Q}_e \big\|, 
\end{equation}
for any $f \in \C_c(\G, \A)$.
On the other hand 
$$\pi_* \mid_{\A} \simeq \big(\oplus_{p \in P}(I \otimes Q_p) \pi_* \big) \mid_{\A} \simeq \big(\oplus (I \otimes Q_e) \pi_* \big)\mid_{\A}, $$ 
i.e., the restriction of $\pi_*$ on $\A$ is unitarily equivalent to a direct sum indexed by $P$ of copies of $(I \otimes Q_e) \pi_* $ restricted to $\A$. From this we obtain,
\[
\psi_e = {\Ind_{\pi_*}}\mid _{\A \cpr} \, \simeq \oplus {\Ind_{ Q_e\pi_*}}\mid _{\A \cpr} \simeq \oplus {\hat{Q}_e\Ind_{\pi_* }}\mid _{\A \cpr}. 
\]
Combining the above with (\ref{normeq}) we now obtain 
\[
\big\|\psi_e(f) \prod_{p\in F} \big(I -\psi^{(p)}(I)\big)\big\| \geq \big\|\Ind_{\pi_*}(f)\hat{Q}_e \big\| = \|\psi_e(f) \|
\]
for any $f \in \C_c(\G, \A)$, which establishes that $\psi$ satisfies the requirements of Fowler's Theorem for $X \cpr$.

From this we obtain that the induced representation $\psi_{*}$ is a faithful $*$-representation of $\N\T_{X\cpr}$. In addition, note that $\psi_{*}(\N\T^+_{X\cpr}) \simeq \N\T^+_{X\cpr}$ is equal to the closed linear span of
\[
\psi_{*}(X\cpr)=\bigcup_{p \in P} \psi_p(X_p\cpr)=\bigcup_{p \in P}\overline{\Ind_{\pi_*}C_c(\G, X_p)}.
\]
However, $\N\T^+_X\cpr$ is also completely isometrically isomorphic to the closed linear span of  $$\bigcup_{p \in P}\overline{\Ind_{\pi_*}C_c(\G, X_p)}.$$ Hence, $\cN\cT^{+}_X \cpr \simeq \cN\cT^+_{X\cpr}$. Finally by Theorem~\ref{T:CNP-envelope} we see that
$$
\cenv\big (\cN\cT^+_X \cpr \big) \simeq  \cenv(\cN\cT^+_{X\cpr}) \simeq \cN\cO^r_{X\cpr}.
$$ \end{proof}

We now use the above to obtain our extension of the Hao-Ng isomorphism. 

\begin{theorem} \label{HaoNgreduced}
Let $(G,P)$ be an abelian, lattice ordered group and let $X$ be a compactly aligned and product system over $P$. Let $\alpha : \G \rightarrow \Aut  ( \N\T_X )$ be a generalized gauge action by a discrete group $\G$. Then
\[
\N\O^r_X \cpr \simeq\N\O^r_{X\cpr}.
\]
  \end{theorem}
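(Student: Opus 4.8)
The plan is to deduce the statement by combining Theorem~\ref{HNtensor2} with the fact that the $\ca$-envelope commutes with reduced crossed products by discrete groups, as established in \cite{KR16}. The whole argument is a short diagram chase once the two main inputs are in place; no new analytic estimate is needed beyond what has already been proved.

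First I would set up the $\G$-equivariant picture on the envelope side. The generalized gauge action $\alpha \colon \G \rightarrow \Aut(\N\T_X)$ restricts to $\N\T^+_X$, and since the Shilov ideal of $\N\T^+_X$ inside $\N\T_X$ is invariant under any automorphism fixing $\N\T^+_X$ (the remark preceding Theorem~\ref{T:CNP-envelope}), it descends to a generalized gauge action of $\G$ on $\cenv(\N\T^+_X)$. By Theorem~\ref{T:CNP-envelope} this realizes $\N\O^r_X \simeq \cenv(\N\T^+_X)$ as an identification of $\G$-$\ca$-algebras, so that $\N\O^r_X \cpr \simeq \cenv(\N\T^+_X) \cpr$. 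I would also record, via Remark~\ref{R:clarify copy}(i), that the concrete algebra $\N\T^+_X \cpr \subseteq \N\T_X \cpr$ is precisely the reduced crossed product of the operator-algebra dynamical system $(\N\T^+_X, \G, \alpha)$ in the sense of \cite{KR16}.

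Next I would invoke the $\ca$-envelope computation for reduced crossed products from \cite{KR16}: for a discrete group acting on an operator algebra, the $\ca$-envelope of the reduced crossed product is the reduced crossed product of the $\ca$-envelope. Applied here this yields
\[
\cenv\big(\N\T^+_X \cpr\big) \simeq \cenv\big(\N\T^+_X\big) \cpr \simeq \N\O^r_X \cpr .
\]
On the other hand, Theorem~\ref{HNtensor2} already gives $\cenv\big(\N\T^+_X \cpr\big) \simeq \N\O^r_{X \cpr}$. Comparing these two evaluations of the same $\ca$-envelope produces the desired isomorphism $\N\O^r_X \cpr \simeq \N\O^r_{X \cpr}$, and since every isomorphism in the chain sends generators to generators, the resulting identification is canonical. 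Note that because we work throughout with \emph{reduced} crossed products and $\G$ is discrete, no amenability hypothesis on $\G$ is required.

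I expect the only genuine obstacle to be bookkeeping rather than substance: one must check that the several identifications are mutually compatible, in particular that the action induced on $\cenv(\N\T^+_X)$ agrees, under the isomorphism $\cenv(\N\T^+_X) \simeq \N\O^r_X$, with the generalized gauge action used to form $\N\O^r_X \cpr$, and that the abstract reduced crossed product of \cite{KR16} matches the concrete copy $\N\T^+_X \cpr \subseteq \N\T_X \cpr$ through Remark~\ref{R:clarify copy}. Once these compatibilities are verified, the proof consists of concatenating the two displayed isomorphisms.
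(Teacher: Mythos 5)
Your proposal is correct and follows essentially the same route as the paper: combine Theorem~\ref{HNtensor2} with the fact that the $\ca$-envelope commutes with reduced crossed products by discrete groups, then use the $\G$-equivariant identification $\cenv(\N\T^+_X) \simeq \N\O^r_X$ from Theorem~\ref{T:CNP-envelope} (via invariance of the Shilov ideal) to conclude. The only slight discrepancy is bibliographic: the paper cites \cite[Theorem 2.5]{KatsIMRN} rather than \cite{KR16} for the envelope--crossed-product commutation, but the argument is otherwise identical, including the compatibility checks you flag at the end.
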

  
  \begin{proof} 
  By Theorem~\ref{HNtensor2} we have $$\cenv\big (\cN\cT^+_X \cpr \big) \simeq  \N\O^r_{X\cpr}.$$ On the other hand \cite[Theorem 2.5]{KatsIMRN} implies that $$\cenv\big (\cN\cT^+_X\cpr \big) \simeq \cenv (\cN\cT^+_X) \cpr .$$ Hence $\cenv (\cN\cT^+_X) \cpr  \simeq \N\O^r_{X\cpr}$. Now an application of Theorem~\ref{T:CNP-envelope} shows that $\cenv (\cN\cT^+_X) \simeq \N\O^r_X$ via a $\G$-equivariant map that intertwines the corresponding generalized gauge actions and the conclusion follows.
 \end{proof}

Let us indicate the utility of Theorem~\ref{HaoNgreduced} with a quick application. Let $(\Lambda, d)$ be a finitely aligned $k$-graph and let $\G$ be a discrete group acting on $X(\Lambda)$ via $\alpha: \G \rightarrow \Aut \ca(\Lambda)$, e.g., let $\G=\bbF^k$ be the free group on $k$-generators acting on $\ca(\Lambda)$ by twisting the generators with unimodular scalars. As we discussed in Remark~\ref{R:clarify copy}(ii), $X(\Lambda) \cpr$ is just the closed subspace of $\ca(\Lambda) \cpr$ generated by all monomials of the form $x U_s$, with $x \in X(\Lambda) $ and $s \in \G$. Theorem~\ref{HaoNgreduced} implies that the Cuntz-Nica-Pimsner $\ca$-algebra of $X(\Lambda) \cpr$ is isomorphic to $\ca(\Lambda) \cpr$. Even in very special cases, it is quite intricate to verify this result directly from the definition of the Cuntz-Nica-Pimsner $\ca$-algebra.

\subsection*{Acknowledgments} The present paper grew out of discussions between the authors during the Multivariable Operator Theory Conference, which was held  at the Technion, Israel, June 18-22, 2017. The authors are grateful to the organizers of the conference for the invitation to participate and for their hospitality. The authors wish to thank Ken Davidson for identifying a mistake in Example \ref{E:Nica-needed} of an earlier version of the paper.


\end{document}